\newtheorem{theorem}{Theorem}
\newtheorem{proposition}{Proposition}
\newtheorem{lemma}{Lemma}
\newtheorem{claim}{Claim}
\begin{document}
\begin{frontmatter}

\title{Asymptotic  Critical   Radii in Random Geometric Graphs over 3-Dimensional Convex regions}

\author{Jie Ding} \ead{jieding78@hotmail.com}
\address{China Institute of FIZ Supply Chain, Shanghai Maritime University, Shanghai 201306, China}

\author{Xiaohua Xu} \ead{artex@gmail.com}
\address{School of Information Engineering,
Yangzhou University, Yangzhou 225000, China}

\author{Shuai Ma} \ead{mashuai@buaa.edu.cn}
\address{ SKLSDE Lab,
Beihang University, Beijing 100191, China}

\author{Xinshan Zhu} \ead{xszhu126@126.com}
\address{
School of Electrical and Information Engineering, Tianjin University, Tianjin 300072, China}

\begin{abstract}
This article presents  the precise asymptotical distribution of two types of critical transmission radii, defined in terms of
$k-$connectivity and the minimum vertex degree, for a random geometry graph distributed over a 3-Dimensional Convex region.
\end{abstract}

\begin{keyword}
Random geometry graph, Asymptotic critical   radius,   Convex region
\end{keyword}

\end{frontmatter}

\section{Introduction and main results}
\label{sec-intro}

\par Let $\chi_n$ be a uniform $n$-point process over a
  convex region $\Omega\subset\mathbb{R}^d(d\geq2)$, i.e., a set of $n$ independent
points each of which is uniformly distributed over $\Omega$, and every pair of points whose Euclidean distance less than $r_n$ is
connected with an undirected edge. So  a random geometric
graph $G(\chi_n,r_n)$ is obtained.

 $k-$connectivity and
the smallest vertex degree are two interesting topological
properties of a random geometry graph. A graph $G$ is said to be
$k-$connected if there is no set of $k-1$ vertices whose removal
would disconnect the graph. Denote by $\kappa$ the connectivity of
$G$, being the maximum $k$ such that $G$ is $k-$connected.
The minimum vertex degree of $G$ is denoted by $\delta$.   Let
$\rho(\chi_n;\kappa\geq k)$ be the minimum $r_n$ such that
$G(\chi_n,r_n)$ is $k-$connected and $\rho(\chi_n;\delta\geq k)$ be
the minimum $r_n$ such that $G(\chi_n,r_n)$ has the smallest degree $k$, respectively.

When $\Omega$ is a unit-area convex region on $\mathbb{R}^2$, the precise probability distributions of these two types of critical radii have been given in an asymptotic manner:
\begin{theorem}\label{thm:combined-Main}(\cite{Ding-RGG2018,Ding-RGG2021})
Let $\Omega\subset \mathbb{R}^2$ be a unit-area convex region such that the length of
the boundary $\partial\Omega$  is $l$, $k\geq 0$ be an integer and $c>0$ be a constant.
\par (i) If $k>0$, let
     \begin{equation*}\label{eq:Theorem-formula-1}
        r_n=\sqrt{\frac{\log n+(2k-1)\log\log n+\xi}{\pi n }},
     \end{equation*}
where $\xi$ satisfies
$$ \left\{\begin{array}{cc}
     \xi=-2\log \left(\sqrt{e^{-c}+\frac{\pi l^2}{64}}-\frac{l\sqrt{\pi}}{8}\right), & k=1, \\
      \xi=2\log \left(\frac{l\sqrt{\pi}}{2^{k+1}k!}\right)+2c, & k>1. \\
    \end{array}
        \right.
$$
\par (ii) If $k=0$, let
     \begin{equation*}\label{eq:Theorem-formula-2}
        r_n=\sqrt{\frac{\log n+c}{\pi n }}.
     \end{equation*}
Then
    \begin{equation}\label{eq:Theorem-formula-3}
       \lim_{n\rightarrow\infty} \frac n{k!}\int_{\Omega}\left(n|B(x,r_n)\cap\Omega|\right)^k e^{-n|B(x,r_n)\cap\Omega|}dx= e^{-c},
    \end{equation}
and therefore,  the probabilities of the two events
$\rho(\chi_n;\delta\geq k+1)\leq r_n$ and $\rho(\chi_n;\kappa\geq
k+1)\leq r_n$ both converge to $\exp\left(-e^{-c}\right)$  as $n\rightarrow\infty$.
\end{theorem}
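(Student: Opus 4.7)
The plan is to prove the asymptotic identity \eqref{eq:Theorem-formula-3} first, and then to extract both probability limits by linking that integral to the first moment of a low-degree vertex count.

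I would begin by decomposing $\Omega=\Omega_{\mathrm{in}}\cup\Omega_{\mathrm{bd}}$ with $\Omega_{\mathrm{in}}=\{x\in\Omega:d(x,\partial\Omega)\geq r_n\}$, so that $|B(x,r_n)\cap\Omega|=\pi r_n^2$ on $\Omega_{\mathrm{in}}$. Substituting the prescribed $r_n$ into the integrand over $\Omega_{\mathrm{in}}$ gives $\frac{n|\Omega_{\mathrm{in}}|(n\pi r_n^2)^k e^{-n\pi r_n^2}}{k!}=\frac{(\log n)^{1-k}e^{-\xi}}{k!}(1+o(1))$, which vanishes for $k\geq 2$, equals $e^{-\xi}$ for $k=1$, and (in the $k=0$ regime where the exponent is $\log n+c$) equals $e^{-c}$. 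For $x\in\Omega_{\mathrm{bd}}$ I would use local coordinates $(s,t)$ with $s$ arc-length on $\partial\Omega$ and $t=d(x,\partial\Omega)$; convexity yields $|B(x,r_n)\cap\Omega|=r_n^2\arccos(-t/r_n)+t\sqrt{r_n^2-t^2}$ plus a curvature remainder, with $t$-derivative $2\sqrt{r_n^2-t^2}$. A Laplace expansion at $t=0$ (where the integrand peaks and $|B\cap\Omega|=\pi r_n^2/2$) yields the boundary contribution $\frac{l\sqrt{\pi}\,e^{-\xi/2}}{2^{k+1}k!}(1+o(1))$; curvature corrections and neighbourhoods of any corners of $\partial\Omega$ contribute only lower order by convexity and rectifiability. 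Summing the interior and boundary contributions and equating them to $e^{-c}$ recovers the two closed forms of $\xi$: for $k\geq 2$ the identity is linear in $e^{-\xi/2}$ and gives the logarithmic form; for $k=1$ it is quadratic in $e^{-\xi/2}$ with positive root $\sqrt{e^{-c}+\pi l^2/64}-l\sqrt{\pi}/8$, and for $k=0$ the boundary is sub-leading.

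Next, let $W_n$ be the number of vertices of $G(\chi_n,r_n)$ of degree $k$. The binomial Palm identity together with the approximation $(1-|B\cap\Omega|)^{n-1-k}=e^{-n|B\cap\Omega|}(1+o(1))$ gives $\mathbf{E}[W_n]=\frac{n}{k!}\int_\Omega(n|B(x,r_n)\cap\Omega|)^k e^{-n|B(x,r_n)\cap\Omega|}dx\cdot(1+o(1))$, so the first step yields $\mathbf{E}[W_n]\to e^{-c}$. I would then apply the Chen--Stein method with dependency radius $2r_n$ to upgrade this to $W_n\Rightarrow\mathrm{Poisson}(e^{-c})$; the only nontrivial input is that the second factorial moment of $W_n$ is asymptotic to $\mathbf{E}[W_n]^2$, which reduces to a double integral handled by the same interior/boundary decomposition plus the geometric estimate that two low-degree vertices cannot lie within $r_n$ of each other without forcing an atypically large empty neighbourhood. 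Since the expected number of vertices of degree strictly less than $k$ is of smaller order at this radius, $\Pr[\delta\geq k+1]=\Pr[W_n=0]+o(1)\to\exp(-e^{-c})$.

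Finally, from $\{\kappa\geq k+1\}\subseteq\{\delta\geq k+1\}$ one already has the $\limsup$ bound $\exp(-e^{-c})$ for $\Pr[\rho(\chi_n;\kappa\geq k+1)\leq r_n]$. For the matching lower bound I would show $\Pr[\kappa\leq k,\ \delta\geq k+1]=o(1)$ by a Penrose-type classification of minimal $(\leq k)$-vertex cuts according to the diameter and boundary distance of the smaller side they separate; each class is controlled by the probability that an associated annular or boundary-strip region is $\chi_n$-empty, and these probabilities sum to $o(1)$ at the $\delta$-threshold. The main obstacle is the boundary analysis in the first step: producing exactly the stated $\xi$, especially the algebraic form $\sqrt{e^{-c}+\pi l^2/64}-l\sqrt{\pi}/8$ at $k=1$, forces a precise Laplace evaluation near $t=0$ and a quantitative control of curvature and corner corrections rather than a mere leading-order estimate; the Chen--Stein step and the cut-set book-keeping are technically heavy but conceptually standard.
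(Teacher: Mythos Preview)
This theorem is not proved in the present paper: it is quoted from \cite{Ding-RGG2018,Ding-RGG2021} as background for the $3$-dimensional result, so there is no ``paper's own proof'' to compare against directly. The closest proxy is the proof of Proposition~\ref{pro:Explicit-Form} together with Propositions~\ref{pro:Poisson-Version}--\ref{pro:conclusion-2}, which the authors explicitly say follows the same framework as the cited $2$D papers.

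Against that framework your sketch is correct and matches the cited approach at the structural level: interior/boundary decomposition of $\Omega$, reduction of the boundary contribution to a one-dimensional integral in the normal coordinate $t$, a first-moment identity $\mathbf{E}[W_n]\sim e^{-c}$, and a Penrose cut-set argument for $\Pr[\kappa=\delta]\to1$. Your boundary constant $\tfrac{l\sqrt{\pi}\,e^{-\xi/2}}{2^{k+1}k!}$ is exactly the right one and does recover both stated forms of $\xi$. Two genuine methodological differences are worth noting. First, for the boundary integral you invoke a Laplace/Watson endpoint expansion at $t=0$; the cited papers (and the $3$D analogue here, Lemma~\ref{lemma:a(t)}) instead evaluate $\int_0^{r/2}(na(t))^k e^{-na(t)}\,dt$ by an explicit integration by parts, after peeling off a thin strip of width $(G(\Omega)+1)r^2$ to justify the replacement $|B(x,r)\cap\Omega|\approx a(t(x))$ with a quantitative $O(r^3)$ error. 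Your Laplace step yields the same leading constant with less bookkeeping, but the paper's finer decomposition into $\Omega(0),\Omega(1,1),\Omega(1,2),\Omega(2)$ is precisely the device that handles the curvature and corner corrections you flag as the main obstacle. Second, you propose Chen--Stein directly on the binomial process $\chi_n$; the route in the paper (following Penrose) is to establish the Poisson-process version first via the integrals $I_1,I_2,I_3$ of Section~\ref{sec:Pro-2} and then de-Poissonize. Both routes work, but the Poissonization detour sidesteps the dependence between overlapping indicator balls that complicates the second-factorial-moment estimate you sketch.
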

This theorem  firstly reveals how the region shape impacts on the critical transmission ranges,
  generalising the previous work~\cite{DH-largest-NN, Penrose-RGG-book,Penrose-k-connectivity,PWan04:Asymptotic-critical-transmission-MobileHoc, PJWan-IT-asymptotic-radius} in which only regular regions like disks or squares are considered.
This paper further demonstrates the asymptotic distribution of the critical radii for convex regions on $\mathbb{R}^3$:
\begin{theorem}\label{thm:Main3D}
Let $\Omega\subset \mathbb{R}^3$ be a unit-volume convex region such that the area of
the boundary $\partial\Omega$  is $\mathrm{Area}(\partial\Omega)$, $k\geq 1$ be an integer and $c>0$ be a constant.
Let
 \begin{equation}\label{eq:Theorem-radius}
 r_n=\left(\frac{16}{5\pi}\frac{\log n+(\frac{3k}{2}-1)\log\log n+\xi}{ n }\right)^{\frac13},
\end{equation}
where   $\xi$  solves  $$ \mathrm{Area}(\Omega)\frac{4}{3\pi}e^{-\frac{2\xi}{3}}\left(\frac{5\pi}{16}\right)^{\frac23}\left(\frac{2}{3}\right)^k\frac{1}{k!} =e^{-c},
$$
  then the probabilities of the two events
$\rho(\chi_n;\delta\geq k+1)\leq r_n$ and $\rho(\chi_n;\kappa\geq
k+1)\leq r_n$ both converge to $\exp\left(-e^{-c}\right)$  as $n\rightarrow\infty$.
\end{theorem}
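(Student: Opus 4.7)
The plan is to follow the Poisson-moment strategy of Theorem~\ref{thm:combined-Main}. The central reduction is to establish the three-dimensional analogue of \eqref{eq:Theorem-formula-3}, namely
\[
\lim_{n\to\infty}\frac{n}{k!}\int_{\Omega}\left(n|B(x,r_n)\cap\Omega|\right)^{k}e^{-n|B(x,r_n)\cap\Omega|}\,dx = e^{-c},
\]
after which a Stein--Chen Poisson approximation shows that the probability of $\rho(\chi_n;\delta\geq k+1)\leq r_n$ tends to $\exp(-e^{-c})$, exactly as in the planar case. The upgrade from $\delta$ to $\kappa$ then reduces to proving that the event $\{\kappa<k+1,\;\delta\geq k+1\}$ has vanishing probability, i.e.\ that no small vertex separator appears with asymptotically positive probability.

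To evaluate the integral above, I would split $\Omega$ into its interior and the boundary tube $T_{r_n}=\{x\in\Omega:\mathrm{dist}(x,\partial\Omega)\leq r_n\}$. On the interior, $|B(x,r_n)\cap\Omega|=\tfrac{4}{3}\pi r_n^{3}$ is maximal, so the factor $e^{-n|B|}$ forces the interior contribution to vanish for the given $r_n$. On the tube, I would use the Steiner parametrization $x\leftrightarrow(y,s)$ with $y\in\partial\Omega$ and $s=\mathrm{dist}(x,\partial\Omega)\in[0,r_n]$; the three-dimensional Steiner formula for convex bodies provides a surface Jacobian of the form $1+O(s)$, so the $(y,s)$ integral decouples to leading order as $\mathrm{Area}(\partial\Omega)\int_{0}^{r_n}\!\cdots ds$. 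In this local chart, approximating $\partial\Omega$ by its tangent plane yields $|B(x,r_n)\cap\Omega|\approx r_n^{3}\,g(s/r_n)$, where $g$ is the explicit function supplied by the spherical-cap volume formula; the curvature contribution is of lower order and can be absorbed into the error. After the change of variable $u=s/r_n$, the boundary contribution becomes a one-dimensional integral of the form $\tfrac{n r_n\,\mathrm{Area}(\partial\Omega)}{k!}\int_{0}^{1}(Mg(u))^{k}e^{-Mg(u)}\,du$ with $M=n r_n^{3}\to\infty$, which I would treat by Laplace's method at the minimizer of $g$. Matching the resulting polynomial factors in $n$ and $\log n$ then forces the specific $\log\log n$ exponent appearing in~\eqref{eq:Theorem-radius} and yields the identity defining $\xi$.

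The main obstacle I expect is the upgrade from $\delta$ to $\kappa$. In $\mathbb{R}^{3}$ the geometry of potential small separators is substantially richer than in the plane: a vertex cut may sit on a two-dimensional surface rather than along a short arc, so the planar covering/enumeration argument used in the 2D template must be redesigned to cope with the extra geometric degrees of freedom, and this is where most of the genuinely new work should lie. The Laplace asymptotics themselves and the Poisson convergence are routine extensions of the 2D treatment, provided one carefully tracks the Steiner expansion, the half-space error, and the uniform estimates on $g(u)$ away from its minimum.
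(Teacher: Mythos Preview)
Your overall architecture matches the paper's: reduce to the integral identity, establish it via a boundary-layer analysis with the spherical-cap approximation, pass through a Poisson version, de-Poissonize, and then identify $\delta$ with $\kappa$. The boundary decomposition you sketch (Steiner tube, half-space model, one-dimensional reduction followed by an asymptotic evaluation) is essentially what the paper does, although the paper parametrizes by a slightly different geometric quantity $t(x)$ (the distance from $x$ to a chord of $\partial\Omega\cap\partial B(x,r)$) rather than the raw distance $s$, and evaluates the resulting one-dimensional integral by an integration-by-parts computation (Lemma~\ref{lemma:a(t)}) rather than by Laplace's method. Either route gives the same constants.

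Where your assessment goes astray is in locating the difficulty. You flag the upgrade from $\delta$ to $\kappa$ as ``where most of the genuinely new work should lie,'' anticipating that three-dimensional separators force a redesigned covering/enumeration argument. In fact Penrose's framework in~\cite{Penrose-k-connectivity} already accommodates this step with essentially no change of idea, and the paper disposes of Proposition~\ref{pro:conclusion-2} by citation. Conversely, the part you call a ``routine extension'' --- controlling the discrepancy between $|B(x,r_n)\cap\Omega|$ and the half-space model uniformly over the tube --- is where the paper spends its effort. One needs a quantitative second-order bound on $\partial\Omega$ (Lemma~\ref{lemma:G(Omega)}) to show the error is $O(r^4)$, small enough that $n$ times it still vanishes in the exponent, and one must excise a thin shell $\Omega(2)$ of width $\Theta(r^2)$ where the half-space approximation is not yet valid but whose volume is negligible. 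Your phrase ``the curvature contribution is of lower order and can be absorbed'' is the right slogan, but making it uniform for a merely convex (not $C^2$) boundary is the substantive point, handled in the paper by approximating $\Omega$ from inside by smooth convex regions. So your plan is sound, but you have the hard and easy parts reversed.
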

The proof  of Theorem~\ref{thm:Main3D} follows the framework presented in~\cite{Ding-RGG2018,Ding-RGG2021}. However,
the details of the technique of boundary treatment are different.
To prove Theorem~\ref{thm:Main3D} it suffices to prove the following four propositions. In fact,
Theorem~\ref{thm:Main3D} is a consequence of  Proposition~\ref{pro:conclusion-1}
and~\ref{pro:conclusion-2}. However, the proofs of Proposition~\ref{pro:conclusion-1}
and~\ref{pro:conclusion-2} rely on
Proposition~\ref{pro:Explicit-Form} and Proposition~\ref{pro:Poisson-Version} which will be proved in Section~\ref{sec:Pro-1}
and~\ref{sec:Pro-2} respectively.

\begin{proposition}\label{pro:Explicit-Form} Under the assumptions of Theorem~\ref{thm:Main3D},
\begin{equation}\label{eq:Explicit-Form}
       \lim_{n\rightarrow\infty} \frac n{k!}\int_{\Omega}\left(n|B(x,r_n)\cap\Omega|\right)^k e^{-n|B(x,r_n)\cap\Omega|}dx= e^{-c}.
\end{equation}
\end{proposition}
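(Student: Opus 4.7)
The plan is to split the integral into a bulk part and a boundary-layer part, show the bulk is negligible, and reduce the boundary contribution to a one-dimensional Laplace integral whose constant matches the defining equation for $\xi$. Write $V(x) := |B(x,r_n)\cap\Omega|$ and decompose $\Omega = \Omega_{\mathrm{in}}\cup\Omega_{\mathrm{bd}}$ with
$\Omega_{\mathrm{in}} := \{x\in\Omega : d(x,\partial\Omega)\ge r_n\}$.
On $\Omega_{\mathrm{in}}$ the ball lies entirely inside $\Omega$, so $V(x)\equiv \tfrac{4\pi}{3}r_n^3$ is constant. Plugging~(\ref{eq:Theorem-radius}) in, the bulk contribution $\tfrac{n\,|\Omega_{\mathrm{in}}|}{k!}\bigl(\tfrac{4\pi n r_n^3}{3}\bigr)^{k}\exp\!\bigl(-\tfrac{4\pi n r_n^3}{3}\bigr)$ is easily seen to be a negative power of $n$ up to logarithmic factors, hence vanishes. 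So only the boundary layer carries mass in the limit.

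On $\Omega_{\mathrm{bd}}$ I would introduce tube coordinates $x = y + t\,\nu(y)$, where $y\in\partial\Omega$, $\nu(y)$ is the inward unit normal and $t\in(0,r_n]$. For a convex $\Omega$ (after approximating $\partial\Omega$ by a $C^{1,1}$ surface when needed) these coordinates are a diffeomorphism and the volume element is $dx = (1+O(r_n))\,dA(y)\,dt$. Convexity yields $\Omega\subseteq H_y$ for the supporting half-space $H_y$, and a direct spherical-cap computation gives the explicit formula
\[
|B(x,r_n)\cap H_y| \;=\; \frac{\pi r_n^3}{3}\,\bigl(2 + 3s - s^{3}\bigr),\qquad s := t/r_n\in[0,1].
\]
A paraboloidal expansion of $\partial\Omega$ at $y$ shows that $\bigl||B(x,r_n)\cap\Omega|-|B(x,r_n)\cap H_y|\bigr| = O(r_n^{4})$, which translates to an $o(1)$ error in $nV(x)$ uniformly on the smooth part of the boundary.

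Setting $f(s) := (2+3s-s^{3})/3$ and $\mu := n\pi r_n^{3}$, the boundary contribution reduces (up to $o(1)$) to
\[
\frac{\mathrm{Area}(\partial\Omega)\, n\,r_n}{k!}\int_0^{1}\bigl(\mu f(s)\bigr)^{k}\, e^{-\mu f(s)}\,ds.
\]
Since $f(0) = 2/3$ is the minimum of $f$ on $[0,1]$ with $f'(0)=1>0$ and $f$ is strictly increasing there, Laplace's method at the left endpoint gives
\[
\int_0^{1}(\mu f(s))^{k} e^{-\mu f(s)}\,ds \;=\; \frac{(2\mu/3)^{k}\, e^{-2\mu/3}}{\mu}\bigl(1+o(1)\bigr).
\]
The remaining step is algebraic: insert $r_n$ from~(\ref{eq:Theorem-radius}) so that $\mu$ is an explicit function of $\log n$, $\log\log n$ and $\xi$, expand $n\,r_n\cdot\mu^{k-1}e^{-2\mu/3}$, and verify that the powers of $n$ and of $\log n$ cancel (the log-log coefficient $\tfrac{3k}{2}-1$ in $r_n$ is chosen precisely to kill the $\log n$ factor). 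What survives is the constant
$\mathrm{Area}(\partial\Omega)\cdot\tfrac{4}{3\pi}\bigl(\tfrac{5\pi}{16}\bigr)^{2/3}(2/3)^{k}e^{-2\xi/3}/k!$, which equals $e^{-c}$ by the defining equation of $\xi$.

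The hard part will be the boundary treatment: making the half-space approximation uniform in $y\in\partial\Omega$ all the way down to $t=0$, controlling the tube-coordinate Jacobian, and handling any non-smooth strata of $\partial\Omega$ (edges and vertices of a polytopal $\Omega$, where several supporting hyperplanes meet and $|B(x,r_n)\cap\Omega|$ is strictly less than any single half-space cap). One natural strategy is to regularize $\Omega$ by an inscribed sequence of smooth convex bodies and pass to the limit, or to prove directly that low-dimensional boundary strata contribute negligibly. This is precisely the ``boundary treatment'' the authors identify as the essentially new ingredient beyond the $\mathbb{R}^{2}$ argument of \cite{Ding-RGG2018,Ding-RGG2021}.
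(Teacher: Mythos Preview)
Your approach is essentially the paper's: both split $\Omega$ into a bulk region (where the full ball fits) and a boundary layer, show the bulk contribution is $o(1)$, approximate $|B(x,r_n)\cap\Omega|$ in the layer by the half-space spherical-cap volume with an $O(r_n^4)$ curvature error, reduce to a one-dimensional integral in the normal variable, and handle non-smooth $\partial\Omega$ by approximation with smooth convex bodies. The paper differs only in technical packaging---it inserts an extra thin shell $\Omega(2)$ of width $\Theta(r_n^2)$ and a further split $\Omega(1)=\Omega(1,1)\cup\Omega(1,2)$, and parametrizes by a geometric quantity $t(x)$ (defined via chords of $\partial\Omega\cap\partial B(x,r)$) rather than the raw distance to the boundary---but the analytic core, Lemma~\ref{lemma:a(t)}, is exactly your endpoint Laplace evaluation of $\int_0^{r/2}(na(t))^k e^{-na(t)}\,dt$.
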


\begin{proposition}\label{pro:Poisson-Version}
Under the assumptions of Theorem~\ref{thm:Main3D},
\begin{equation}\label{eq:concusion-1-Poisson}
\lim_{n\rightarrow\infty}\Pr\left\{\rho(\mathcal{P}_n;\delta\geq k+1)\leq r_n\right\} =\exp\left(-e^{-c}\right),
\end{equation}
where $\mathcal{P}_n$ is a homogeneous Poisson point process of intensity $n$ (i.e., $n|\Omega|$) distributed over unit-volume convex region $\Omega$.
\end{proposition}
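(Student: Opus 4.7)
The plan is to reduce the proposition to a Poisson limit for the count of low-degree vertices and then to invoke Proposition~\ref{pro:Explicit-Form}. Observe that $\{\rho(\mathcal{P}_n; \delta \geq k+1) \leq r_n\}$ is precisely the event that every vertex of $G(\mathcal{P}_n, r_n)$ has degree at least $k+1$. For $0 \leq j \leq k$, let $W_j$ denote the number of points of $\mathcal{P}_n$ having degree exactly $j$ in $G(\mathcal{P}_n, r_n)$. It suffices to show that $W_k \xrightarrow{d} \mathrm{Poisson}(e^{-c})$ and that $W_j \to 0$ in probability for every $j < k$; then
\begin{equation*}
\Pr\bigl[\rho(\mathcal{P}_n; \delta \geq k+1) \leq r_n\bigr] \;=\; \Pr[W_0 = \cdots = W_k = 0] \;\longrightarrow\; \exp\!\bigl(-e^{-c}\bigr).
\end{equation*}

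For the first moments, the Slivnyak-Mecke formula applied to the Poisson process $\mathcal{P}_n$ gives
\begin{equation*}
    E[W_j] \;=\; n \int_\Omega \frac{\bigl(n |B(x, r_n) \cap \Omega|\bigr)^j}{j!}\, e^{-n |B(x, r_n) \cap \Omega|} \, dx.
\end{equation*}
When $j = k$ this is exactly the integral of Proposition~\ref{pro:Explicit-Form}, so $E[W_k] \to e^{-c}$. For $j < k$ I would re-run the boundary stratification underlying Proposition~\ref{pro:Explicit-Form} (interior, near-face, near-edge, near-vertex regions of $\Omega$): the dominant contribution sits in a strip of width $O(r_n)$ along $\partial\Omega$, and replacing $(n|B|)^k$ by $(n|B|)^j$ introduces an extra factor of order $(\log n)^{j-k}$, forcing $E[W_j] \to 0$. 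By Markov's inequality, $W_j \to 0$ in probability.

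To promote $E[W_k] \to e^{-c}$ to a Poisson limit I would compute every descending factorial moment and show $E[W_k^{(m)}] \to e^{-mc}$. By the multivariate Slivnyak-Mecke formula,
\begin{equation*}
E[W_k^{(m)}] \;=\; n^m \int_{\Omega^m} \Pr\!\left[\bigcap_{i=1}^{m} \mathcal{E}_i(x_1, \ldots, x_m)\right] dx_1 \cdots dx_m,
\end{equation*}
where $\mathcal{E}_i$ is the event that, under the Poisson process $\mathcal{P}_n$ augmented with the deterministic atoms $\{x_j : j \neq i\}$, the point $x_i$ has exactly $k$ neighbors within distance $r_n$. Split $\Omega^m$ into the separated region $S_m = \{\min_{i \neq j}|x_i - x_j| > 2 r_n\}$ and its complement. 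On $S_m$ the balls $B(x_i, r_n)$ are pairwise disjoint, the Poisson-process independence of counts in disjoint regions factorises the joint probability, and this contribution is asymptotically $\bigl(E[W_k]\bigr)^m \to e^{-mc}$.

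The principal obstacle is therefore showing that the clustered contribution from $\Omega^m \setminus S_m$ is $o(1)$. A naive volume bound replaces the joint indicator by a single $\mathcal{E}_i$ and estimates the measure of the clustered region by $O(r_n^{3(m-1)})$, producing an excess factor $n \,(n r_n^3)^{m-1} = \Theta\bigl((\log n)^{m-1}\bigr)$ that must be absorbed by the Poisson exponential $e^{-n |\bigcup_i (B(x_i, r_n) \cap \Omega)|}$ present in the joint probability. The absorption has to be organised according to whether the overlapping cluster sits near a face, an edge or a vertex of $\partial\Omega$---the 3D boundary stratification that replaces the 2D boundary/corner analysis of~\cite{Ding-RGG2018,Ding-RGG2021}, and I expect this book-keeping step to be the most delicate. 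Once the clustered contribution is shown to vanish, every factorial moment of $W_k$ converges to $e^{-mc}$, identifying the limit as $\mathrm{Poisson}(e^{-c})$ and completing the proof.
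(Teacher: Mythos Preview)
Your overall plan (Poisson limit for $W_k$, vanishing of $W_j$ for $j<k$) is a legitimate route, but it diverges from the paper's and leaves the decisive step unexecuted.

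\medskip

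\textbf{Different route.} The paper does not compute all factorial moments. Following Penrose, it reduces the Poisson limit to showing that three \emph{pairwise} integrals $I_1,I_2,I_3$ (morally the Chen--Stein quantities $b_1,b_2$) tend to zero. All the work is thus confined to pairs $(x,y)$ with $\|x-y\|\le 3r$, and the $m$-point clustering book-keeping you anticipate never arises. Your factorial-moment scheme would also work in principle, but it is heavier: one must control clusters of every size $m$, whereas the paper's criterion needs only $m=2$.

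\medskip

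\textbf{Genuine gap.} Your boundary picture does not match this paper's setting. You repeatedly invoke a face/edge/vertex stratification, but $\Omega$ here is a \emph{general} convex body, treated via smooth approximation of $\partial\Omega$; there are no edges or vertices to stratify. In Proposition~\ref{pro:Explicit-Form} the decomposition is purely by distance to $\partial\Omega$ (the sets $\Omega(0),\Omega(1),\Omega(2)$), and for the clustered contribution---precisely the step you flag as ``most delicate''---the paper's Proposition~\ref{proposition1} splits the inner ball $B(x,r)\cap\Omega$ into $\Gamma_1(x)$ (within a scale $d_0$ of $x$), $\Gamma_2(x)$ (at distance $\ge d_0$ from $x$ and $>(G(\Omega)+1)r^2$ from $\partial\Omega$), and $\Gamma_3(x)$ (within $(G(\Omega)+1)r^2$ of $\partial\Omega$). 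The geometric input that performs the absorption is Lemma~\ref{lem:shadow-low-bound}: for $y\in\Gamma_2(x)$ one has $v_{y\setminus x}\ge\tfrac12 V^{*}(d_0)=\tfrac18\pi d_0^3$, and the choice $d_0=(4r/(n^{2/3}\pi^{2/3}))^{1/3}$ makes $\tfrac14 n\pi d_0^3=(n\pi r^3)^{1/3}$, so $e^{-n v_{y\setminus x}}$ kills every polylogarithmic factor. Your proposal correctly locates the obstacle but supplies neither this volume lemma nor the correct decomposition, so as written it is a programme rather than a proof.
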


\begin{proposition}\label{pro:conclusion-1}   Under the assumptions of Theorem~\ref{thm:Main3D},
\begin{equation}\label{eq:conclusion-1}
\lim_{n\rightarrow\infty}\Pr\left\{\rho(\chi_n;\delta\geq k+1)\leq r_n\right\} =e^{-e^{-c}}.
\end{equation}
\end{proposition}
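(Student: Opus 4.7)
The plan is to deduce Proposition~\ref{pro:conclusion-1} from the Poisson statement in Proposition~\ref{pro:Poisson-Version} by a de-Poissonization argument, with Proposition~\ref{pro:Explicit-Form} supplying the first-moment information that drives both the Poisson and binomial limits.

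First, I reformulate the event. Let $W_n$ count the points of $\chi_n$ whose degree in $G(\chi_n,r_n)$ is at most $k$, so that $\{\rho(\chi_n;\delta\geq k+1)\leq r_n\}=\{W_n=0\}$, and let $W_n^{\mathcal{P}}$ be the analogous count for $\mathcal{P}_n$. A Palm-style expansion writes $E[W_n]$ as an $n$-fold integral over $\Omega$ of a partial binomial sum in the normalized volume $|B(x,r_n)\cap\Omega|$; the dominant term is the one with exactly $k$ neighbours, and a term-by-term comparison of binomial with Poisson probabilities combined with Proposition~\ref{pro:Explicit-Form} yields $E[W_n]\to e^{-c}$. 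The same calculation gives $E[W_n^{\mathcal{P}}]\to e^{-c}$ directly via the Campbell--Mecke formula.

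Next I use the coupling in which $\mathcal{P}_n$ is obtained by drawing $N\sim\mathrm{Poisson}(n)$ points i.i.d.\ uniformly on $\Omega$, so that conditionally on $N=m$ the configuration is exactly $\chi_m$. Setting $p_m:=\Pr\{\rho(\chi_m;\delta\geq k+1)\leq r_n\}$, Proposition~\ref{pro:Poisson-Version} reads
\[
\sum_{m\geq 0} p_m\,\frac{e^{-n}n^m}{m!}\;\longrightarrow\;\exp(-e^{-c}).
\]
The Poisson mass is concentrated on the window $I_n:=[n-\sqrt n\log n,\,n+\sqrt n\log n]$ up to error $o(1)$, so it suffices to show that $p_m$ is asymptotically constant for $m\in I_n$; the weighted average must then equal $p_n$ in the limit.

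The main obstacle, and the point at which the three-dimensional geometry re-enters, is exactly this uniform continuity of $p_m$ across $I_n$. Because the event $\{\delta\geq k+1\}$ is not monotone in the point count, I will realise $\chi_{n_-}\subset\chi_n\subset\chi_{n_+}$ on a common probability space (with $n_\pm=n\pm\sqrt n\log n$) by adding $O(\sqrt n\log n)$ independent uniform points, and bound the expected number of vertices whose status as ``bad'' (degree $\leq k$) changes under these additions. Since $nr_n^3=\Theta(\log n)$, each inserted point perturbs the neighbour count of at most $O(\log n)$ existing vertices; a second-moment estimate in the spirit of the calculation for $E[W_n]$, combined with the typical-volume estimate $n|B(x,r_n)\cap\Omega|\asymp\log n$, shows that the expected number of vertices switching status is $o(1)$. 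This forces $|p_{n_+}-p_{n_-}|\to 0$ and, together with the displayed Poisson identity, yields $p_n\to\exp(-e^{-c})$. The role of the three-dimensional boundary is confined to the first-moment estimate: the factor $\mathrm{Area}(\partial\Omega)$ appearing in the definition of $\xi$ in~\eqref{eq:Theorem-radius} is precisely what calibrates $E[W_n]$ to the target value $e^{-c}$.
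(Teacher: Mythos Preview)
Your proposal is correct and takes essentially the same approach as the paper: the paper's entire proof of Proposition~\ref{pro:conclusion-1} is the single sentence ``Proposition~\ref{pro:Poisson-Version} can lead to Proposition~\ref{pro:conclusion-1} by a de-Poissonized technique\ldots\ standard and thus omitted here, please see~\cite{Penrose-k-connectivity} for details.'' You have simply unpacked that de-Poissonization in more detail --- the Poisson--binomial coupling, concentration of the Poisson count on a window of width $\sqrt{n}\log n$, and the bound on the expected number of vertices whose low-degree status changes when $O(\sqrt{n}\log n)$ points are added. One small comment: the ``second-moment estimate'' you mention is not really needed; a first-moment bound (expected number of status changes is $o(1)$) already suffices, since added points can only raise degrees of existing vertices, and the expected number of \emph{new} low-degree vertices is $O(\sqrt{n}\log n)\cdot \Theta(n^{-1})=o(1)$ by the same calculation underlying Proposition~\ref{pro:Explicit-Form}.
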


\begin{proposition}\label{pro:conclusion-2}  Under the assumptions of Theorem~\ref{thm:Main3D},
\begin{equation}\label{eq:conclusion-2}
\lim_{n\rightarrow\infty}\Pr\left\{\rho(\chi_n;\delta\geq k+1)=\rho(\chi_n;\kappa\geq
k+1)\right\}=1.
\end{equation}
\end{proposition}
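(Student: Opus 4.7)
The plan is to combine the deterministic inequality $\rho(\chi_n;\delta\geq k+1)\leq \rho(\chi_n;\kappa\geq k+1)$, which holds because $(k+1)$-connectivity implies $\delta\geq k+1$, with a probabilistic argument ruling out \emph{non-trivial} small cuts, in the spirit of Penrose~\cite{Penrose-k-connectivity} and the planar treatment~\cite{Ding-RGG2018,Ding-RGG2021}. Setting $r^\star_n:=\rho(\chi_n;\delta\geq k+1)$, it suffices to show
\begin{equation*}
\lim_{n\to\infty}\Pr\bigl\{G(\chi_n,r^\star_n)\text{ is not }(k+1)\text{-connected}\bigr\}=0.
\end{equation*}

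Suppose $G(\chi_n,r^\star_n)$ admits a separator $U\subseteq\chi_n$ with $|U|\leq k$ and let $C$ be a smallest connected component of $\chi_n\setminus U$. Every $v\in C$ has degree at least $k+1$ in $G(\chi_n,r^\star_n)$, and all its neighbors lie in $U\cup C$; since $|U|\leq k$ this forces $|C|\geq 2$. Thus the failure event is contained in the event that $G(\chi_n,r^\star_n)$ possesses a separator of size at most $k$ whose smallest component has at least two vertices; call such a separator \emph{non-trivial}. By Proposition~\ref{pro:conclusion-1} and the monotonicity of $\kappa(G(\chi_n,r))$ in $r$, for any fixed $\varepsilon>0$ it then suffices to prove
\begin{equation*}
\Pr\bigl\{G(\chi_n,(1+\varepsilon)r_n)\text{ has a non-trivial }k\text{-separator}\bigr\}\longrightarrow 0.
\end{equation*}

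I would bound this probability by a first-moment computation, classifying the small component $C$ by its cardinality $m\geq 2$. For each $m$, a union bound over ordered $m$-tuples in $\chi_n$ forming a connected candidate $C$ together with an additional separator $U$ of size $\leq k$ reduces the problem, via the Palm--Mecke formula on the Poissonised process $\mathcal{P}_n$ followed by the same de-Poissonisation employed in Proposition~\ref{pro:conclusion-1}, to an integral whose integrand has the form $\exp(-n|N_C\cap\Omega|)$, where $N_C$ is the $(1+\varepsilon)r_n$-tube around $C$. The dominant case $m=2$ carries the extra pair constraint $\|x_1-x_2\|\leq(1+\varepsilon)r_n$, which contributes a factor of order $r_n^3=O((\log n)/n)$ relative to the single-vertex integral of Proposition~\ref{pro:Explicit-Form}; larger values of $m$ produce still smaller terms.

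The main obstacle, as in Proposition~\ref{pro:Explicit-Form}, is the boundary contribution: points near $\partial\Omega$ enjoy only the weaker suppression coming from a spherical-cap volume rather than from a full ball. I would handle this exactly as in Section~\ref{sec:Pro-1}, splitting the integration domain into a collar of width $O(r_n)$ around $\partial\Omega$ and an interior part, and expanding $|B(x,r_n)\cap\Omega|$ using the three-dimensional spherical-cap formula. Since the single-vertex integral is already known to tend to the finite constant $e^{-c}$, the extra $r_n^3$ factor forces the non-trivial separator probability to be $o(1)$, which closes the argument.
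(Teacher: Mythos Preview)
Your overall plan coincides with the paper's, which does not give an independent argument but simply invokes Penrose~\cite{Penrose-k-connectivity}: rule out non-trivial $k$-separators by a first-moment bound over candidate components $C$ with $|C|\geq 2$, the two region-specific inputs being Proposition~\ref{pro:Explicit-Form} and the lower bound $|B(x,r)\cap\Omega|\geq C\pi r^3$ of Lemma~\ref{lemma:LowerBound-B(x,r)}.

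There is, however, a genuine gap in your reduction to the deterministic radius $(1+\varepsilon)r_n$. Monotonicity of $\kappa$ in $r$ points the wrong way for this purpose: a non-trivial separator present in $G(\chi_n,r^\star_n)$ can disappear at the larger radius $(1+\varepsilon)r_n$, since adding edges destroys cuts. Showing that $G(\chi_n,(1+\varepsilon)r_n)$ has no non-trivial $k$-separator would only give $\rho(\chi_n;\kappa\geq k+1)\leq(1+\varepsilon)r_n$ w.h.p., hence $\rho(\chi_n;\kappa)/\rho(\chi_n;\delta)\to 1$ in probability, which is strictly weaker than the exact equality in~\eqref{eq:conclusion-2}. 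Penrose's construction of $E_n(K),F_n(K)$ uses instead a two-sided localisation $r^\star_n\in[r_n^-,r_n^+]$ (deterministic, obtainable from Proposition~\ref{pro:conclusion-1} by sending $c\to\pm\infty$): on the failure event the separated set $C$ is connected at radius $\leq r_n^+$, while its $r_n^-$-neighbourhood contains at most $|C|+k$ points of $\chi_n$. That is the point-configuration event one actually bounds.

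A smaller issue: in your $m=2$ bookkeeping, the pair constraint contributes $n\cdot\Theta(r_n^3)=\Theta(\log n)$ relative to the single-vertex integral, not $\Theta(r_n^3)$, because the second vertex carries its own factor of $n$. The $o(1)$ conclusion therefore must come from the strict gain $|N_C\cap\Omega|>|B(x_1,r)\cap\Omega|$ in the exponent, and that integral over $x_2$ has to be estimated explicitly; this is essentially what the $I_2,I_3$ analysis in Section~\ref{sec:Pro-2} does for the Poisson model.
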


\par
We use the following notations throughout this article.
(1) Region $\Omega \subset \mathbb{R}^3$ is a unit-volume convex  region, and $B(x,r)\subset \mathbb{R}^3$ is a ball
centered at $x$ with  radius $r$.
(2) Notation $|A|$ is a short  for the volume of a measurable set $A\subset \mathbb{R}^3$ and $\|\cdot\|$ represents the length of a line segment. $\mathrm{Area}(\cdot)$ denotes the area of a surface.
(3) $\mathrm{dist}(x, A)=\inf_{y\in A}\|xy\|$ where $x$ is a point and $A$ is a set.
(4) Given any two nonnegative functions $f(n)$ and $g(n)$, if there exist two constants $0<c_1<c_2$ such that
$c_1g(n)\leq f(n)\leq c_2g(n)$ for any sufficiently large $n$, then  denote $f(n)=\Theta(g(n))$.
We also use notations $f(n)=o(g(n))$ and $f(n)\sim g(n)$ to denote that $\lim\limits_{n\rightarrow\infty}\frac{f(n)}{g(n)}=0$ and $\lim\limits_{n\rightarrow\infty}\frac{f(n)}{g(n)}=1$, respectively. A surface is said to be smooth in this paper, meaning  that its function has continuous second derivatives.

\section{Proof of Proposition~\ref{pro:Explicit-Form}}\label{sec:Pro-1}

Throughout this article, we define
\begin{equation}\label{eq:psi-function}
       \psi^k_{n,r}(x)= \frac{\left(n|B(x,r)\cap\Omega|\right)^k
       e^{-n|B(x,r)\cap\Omega|}}{k!}.
\end{equation}
All left work in this section is to prove Proposition~\ref{pro:Explicit-Form}, i.e., $ n\int_{\Omega} \psi^k_{n,r}(x)dx\sim e^{-c}.$
The proof will follow the framework proposed in~\cite{Ding-RGG2018} to carefully deal with the boundary of a convex region.
The framework is developed  based on the pioneering work of Wan \emph{et al.}
in~\cite{PWan04:Asymptotic-critical-transmission-MobileHoc} and~\cite{PJWan-IT-asymptotic-radius}, although in which only regular
regions like disk or square are considered.

%

The following three conclusions are elementary, with their proof   presented in Appendix~A for reviewing.

\begin{lemma}\label{lemma:LowerBound-B(x,r)} Let $\Omega\subset \mathbb{R}^3$ be a bounded convex region,
then there exists a positive constant $C$ such that for any
sufficiently small $r$,
$
        \mathop{\inf}_{x\in \Omega}|B(x,r)\cap \Omega |\geq C\pi
        r^3.
$
In particular, if $\partial \Omega$ is smooth, then $\forall x\in \partial \Omega$,
$\lim_{r\rightarrow 0}\frac{|B(x,r)\cap \Omega |}{\frac{4}{3}\pi r^3}=\frac{1}{2}$.
\end{lemma}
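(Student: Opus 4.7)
The statement splits into two essentially independent geometric claims: a uniform cubic lower bound on $|B(x,r)\cap\Omega|$ for every $x\in\Omega$, and an exact half-volume ratio at smooth boundary points. The plan is to obtain the uniform bound from an inscribed-ball plus cone-visibility argument exploiting convexity, and to obtain the boundary ratio from a local second-order expansion of $\partial\Omega$ at $x$.

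\emph{Step 1 (uniform lower bound).} Since $\Omega$ is a bounded convex region of positive volume, I fix once and for all an interior point $p$ and a radius $\rho>0$ with $B(p,3\rho)\subset\Omega$, and set $D=\mathrm{diam}(\Omega)$. I split $x\in\Omega$ according to $\|xp\|$. If $\|xp\|\le 2\rho$, then $B(x,\rho)\subset B(p,3\rho)\subset\Omega$, giving $|B(x,r)\cap\Omega|=\tfrac{4}{3}\pi r^3$ for every $r\le\rho$. If $\|xp\|>2\rho$, convexity yields $\mathrm{co}\bigl(\{x\}\cup B(p,\rho)\bigr)\subset\Omega$; the tangent cone $C_x$ from $x$ toward $B(p,\rho)$ has half-angle at least $\theta_0=\arcsin(\rho/D)$, and the convex hull reaches distance at least $\sqrt{\|xp\|^2-\rho^2}\ge\sqrt{3}\,\rho$ along every direction of $C_x$. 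Hence for $r\le\rho$ the spherical sector $B(x,r)\cap C_x$ lies inside $\Omega$ and its volume is at least $\tfrac{2}{3}\pi r^3(1-\cos\theta_0)$. Combining the two cases yields the claim with $C=\tfrac{2}{3}(1-\cos\theta_0)>0$, depending only on $\Omega$.

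\emph{Step 2 (boundary asymptotic).} Fix $x\in\partial\Omega$ with $\partial\Omega$ of class $C^2$. Place Cartesian coordinates with origin at $x$, tangent plane $\{z=0\}$, and inward normal along $+z$; locally $\partial\Omega$ is a graph $z=h(u,v)$ with $h(0,0)=0$, $\nabla h(0,0)=0$, and $|h(u,v)|\le M(u^2+v^2)$ for some constant $M$ controlled by the principal curvatures. The upper half-ball $B(x,r)\cap\{z>0\}$ has volume $\tfrac{2}{3}\pi r^3$, and its symmetric difference with $B(x,r)\cap\Omega$ is contained in the slab $\{|z|\le Mr^2\}\cap B(x,r)$, whose volume is $O(r^4)$. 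Dividing by $\tfrac{4}{3}\pi r^3$ and letting $r\to 0$ produces the limit $\tfrac{1}{2}$.

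\emph{Main obstacle.} The delicate point is Step~1: one must make sure the truncated cone $B(x,r)\cap C_x$ actually sits inside $\mathrm{co}(\{x\}\cup B(p,\rho))$ and that the solid-angle lower bound is uniform over $x\in\Omega$. The split at $\|xp\|=2\rho$, combined with the restriction $r\le\rho$, is precisely what uniformises both the reach of the convex hull and the half-angle of $C_x$; weaker choices let $C$ degenerate as $x$ approaches a far extremal point of $\Omega$. Step~2 is routine once the $C^2$ graph parameterisation is in place, so the argument is essentially dominated by the convex-geometric Step~1.
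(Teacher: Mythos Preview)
Your proposal is correct. Both steps are sound: the cone-visibility argument in Step~1 is the standard convex-geometry route to a uniform cubic lower bound, and your case split at $\|xp\|=2\rho$ together with the restriction $r\le\rho$ does indeed make both the half-angle and the reach of the convex hull uniform in~$x$; Step~2 is the routine $C^2$ graph estimate, and your use of the fact that the tangent plane is a \emph{global} supporting hyperplane (so $\Omega\subset\{z\ge0\}$ in your coordinates) cleanly handles one side of the symmetric difference.

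As for comparison: the paper does not actually prove this lemma. In Appendix~A it states that the proofs of Lemma~\ref{lemma:LowerBound-B(x,r)} and Lemma~\ref{lemma:G(Omega)} ``are elementary and similar to the ones presented in~[Ding-RGG2018]'' and omits them. So there is no detailed argument to compare against; your write-up is consistent with the paper's description of the proof as elementary, and it supplies exactly the kind of 3D adaptation of the 2D argument that the paper alludes to.
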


%

%
%
 \begin{figure}[htbp]
 \begin{center}
       \scalebox{0.50}[0.50]{
          \includegraphics{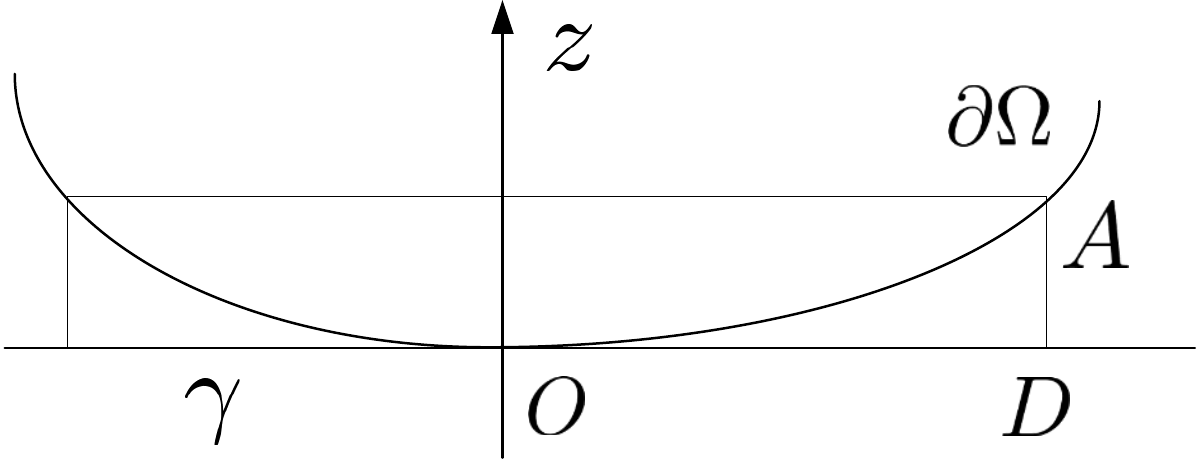}
       }\caption{$\partial \Omega$ is tangent to $\gamma$ at point $O$}\label{fig:lemma2}
  \end{center}
  \end{figure}

\begin{lemma}\label{lemma:G(Omega)}
Suppose smooth surface $\partial \Omega$ is tangent to   plane  $\gamma$  at point $O$, seeing Figure~\ref{fig:lemma2}. Point $A\in\partial \Omega$ and $D\in\gamma$, $AD \perp\gamma$ at point $D$. Then
\begin{equation}\label{eq:lem-G(Omega)-1}
 \|AD\|\leq G(\Omega)\|OD\|^2+o(\|OD\|),
\end{equation}
where  $G(\Omega)>0$ is a constant only depending on $\Omega$.
This leads to
that if $\|AD\|> (G(\Omega)+1)r^2$, then
$\|OD\|> r, $
as long as $r$ is sufficiently small.
\end{lemma}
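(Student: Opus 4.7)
The plan is to reduce the claim to a local Taylor expansion of $\partial\Omega$ about the tangency point. First I would choose coordinates adapted to the configuration: place $O$ at the origin, identify $\gamma$ with the plane $\{z=0\}$, and let the $z$-axis point into the half-space containing $\Omega$. Because $\partial\Omega$ is smooth and tangent to $\gamma$ at $O$, a neighbourhood of $O$ in $\partial\Omega$ is the graph of a $C^2$ function $z=f(u,v)$ with $f(0,0)=0$ and $\nabla f(0,0)=0$. In these coordinates $A=(u,v,f(u,v))$, its orthogonal projection onto $\gamma$ is $D=(u,v,0)$, so $\|AD\|=|f(u,v)|$ and $\|OD\|=\sqrt{u^2+v^2}$.

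Next I would apply Taylor's theorem to $f$ at the origin. Since $f\in C^2$,
\begin{equation*}
f(u,v)=\tfrac12\bigl(f_{uu}(0)\,u^2+2f_{uv}(0)\,uv+f_{vv}(0)\,v^2\bigr)+o\bigl(u^2+v^2\bigr),
\end{equation*}
so $|f(u,v)|\le M\,(u^2+v^2)+o(u^2+v^2)$, where $M$ can be taken to be any bound on the spectral radius of the Hessian of $f$ at $O$. Translating back into the geometric quantities yields the sharper inequality $\|AD\|\le M\,\|OD\|^2+o(\|OD\|^2)$, which in particular implies the form $\|AD\|\le M\,\|OD\|^2+o(\|OD\|)$ stated in the lemma. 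To make the constant depend only on $\Omega$ (and not on the chosen $O$), I would invoke compactness: since $\Omega$ is bounded and convex, $\partial\Omega$ is compact, and the second partials of the local graph parametrisation vary continuously with the tangency point, so they are uniformly bounded. Taking $G(\Omega)$ to be such a uniform bound (essentially a bound on the largest principal curvature of $\partial\Omega$) gives a single constant that works for every $O\in\partial\Omega$.

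The second half of the lemma then follows by contradiction. Suppose $\|AD\|>(G(\Omega)+1)r^2$ but $\|OD\|\le r$. By the inequality just established, $\|AD\|\le G(\Omega)\,r^2+o(r^2)$, and for all sufficiently small $r$ the error term is bounded by $r^2$, giving $\|AD\|\le(G(\Omega)+1)r^2$, contradicting the hypothesis. The main subtlety to guard against is ensuring that both $G(\Omega)$ and the rate at which the $o(\cdot)$ term vanishes are uniform across $O\in\partial\Omega$; this is precisely what the compactness of $\partial\Omega$ together with the $C^2$ hypothesis delivers, and it is the only non-trivial point in an otherwise routine Taylor-expansion argument.
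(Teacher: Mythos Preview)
Your argument is correct and is precisely the ``elementary'' Taylor-expansion-plus-compactness proof one would expect; the paper in fact omits the proof of this lemma entirely, stating only that it is elementary and analogous to the two-dimensional version in~\cite{Ding-RGG2018}. Your identification of the one genuine subtlety---that both the Hessian bound $G(\Omega)$ and the rate of decay of the remainder must be uniform over $O\in\partial\Omega$, which follows from compactness and the $C^2$ assumption---is exactly the point that needs to be made explicit.
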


For any $t\in[0,r]$, we
define
\begin{equation}
  a(r,t)=|\{x=(x_1,x_2,x_3):x_1^2+x_2^2+x_3^2\leq r^2, x_1\leq t\}|.
\end{equation}
$ a(r,t)$ is usually shortly denoted by $a(t)$. Here this definition follows the ones in~\cite{PWan04:Asymptotic-critical-transmission-MobileHoc, PJWan-IT-asymptotic-radius} where only the sets on planes are considered.
\begin{lemma}\label{lemma:a(t)}
Let $r=r_n=\left(\frac{16}{5\pi}\frac{\log n+(\frac{3k}{2}-1)\log\log n+\xi}{ n }\right)^{\frac13}$ and $k\geq 1$, then
$$
    n\int_0^{\frac r2}\frac{(na(t))^ke^{-na(t)}}{k!}dt\sim
   \frac{4}{3\pi}e^{-\frac{2\xi}{3}}\left(\frac{5\pi}{16}\right)^{\frac23}\left(\frac{2}{3}\right)^k\frac{1}{k!}.
$$
\end{lemma}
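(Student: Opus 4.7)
The strategy is to recognise the integral as, after a change of variable, a tail of the upper incomplete Gamma function, exploiting the fact that $(na(t))^k e^{-na(t)}/k!$ is monotone decreasing on $[0, r/2]$ for large $n$ (because $na(t) \gg k$ there) and therefore concentrates all of its mass at $t = 0$. First I would compute $a$ explicitly by integrating disc cross-sections, obtaining
\[
a(t) = \int_{-r}^{t} \pi(r^2 - s^2)\,ds = \frac{2\pi r^3}{3} + \pi r^2 t - \frac{\pi t^3}{3},
\]
so that $a(0) = \frac{2\pi r^3}{3}$ and $a'(t) = \pi(r^2 - t^2) = \pi r^2\bigl(1 - (t/r)^2\bigr)$.

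Substituting $u = na(t)$ maps $[0, r/2]$ onto $[v_0, v_1] = \bigl[\tfrac{2\pi n r^3}{3},\, \tfrac{9\pi n r^3}{8}\bigr]$ with $dt = du/(n\pi(r^2 - t(u)^2))$. Since the factor $e^{-u}$ concentrates the transformed integrand within $u - v_0 = O(\log v_0)$, the corresponding $t$-support has $t = O(1/(n\pi r^2)) = o(r)$; hence $\pi(r^2 - t^2) \sim \pi r^2$ uniformly there, and $1/(\pi r^2)$ can be pulled outside with negligible multiplicative error. I would then apply the classical tail expansion
\[
\int_{A}^{\infty} \frac{u^k e^{-u}}{k!}\,du = e^{-A}\sum_{j=0}^{k}\frac{A^j}{j!} = \frac{A^k e^{-A}}{k!}\bigl(1 + O(1/A)\bigr)
\]
with $A = v_0 \to \infty$, and separately verify that extending the upper limit from $v_1$ to $\infty$ contributes only $O(v_1^k e^{-v_1})$, which is smaller by a factor $e^{-(v_1 - v_0)} = n^{-\Theta(1)}$. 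This yields the intermediate asymptotic $n\int_0^{r/2}\cdots dt \sim \frac{v_0^k e^{-v_0}}{\pi r^2 k!}$.

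The final and most delicate step is the algebraic substitution. Plugging $nr_n^3 = \frac{16 L_n}{5\pi}$ with $L_n = \log n + (\frac{3k}{2} - 1)\log\log n + \xi$ into the intermediate expression, expanding $v_0 = \frac{2\pi nr_n^3}{3}$ and $r_n^{-2} = \bigl(\frac{5\pi n}{16 L_n}\bigr)^{2/3}$, and collecting terms, one must check that the $n$- and $\log n$-dependences cancel exactly, leaving $\frac{4}{3\pi}\bigl(\frac{5\pi}{16}\bigr)^{2/3}\bigl(\frac{2}{3}\bigr)^k \frac{1}{k!}\, e^{-2\xi/3}$. I expect this bookkeeping to be the main obstacle: the fractional exponents $2/3$ arising from the cube-root normalisation must interact with the exponential $e^{-v_0}$ so that the residual $n^{\ast}(\log n)^{\ast}$ factors vanish, and the remaining combinatorial constant then has to match the specific number in the statement. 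The analytic steps above are by contrast routine Laplace-type estimates around a concentrated peak, whose errors are controlled by simple bounds on the width of the effective support.
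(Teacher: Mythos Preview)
Your analytic machinery (the change of variable $u=na(t)$, the incomplete-Gamma tail, and pulling out the Jacobian at the concentration point) is sound and is essentially equivalent to the paper's device, which is an integration by parts producing the boundary term $-\tfrac{1}{a'(t)}e^{-f(t)}\sum_{i\le k}f(t)^i/i!$ plus a remainder that is shown to be $o(1)$. The two routes would give the same answer were they applied to the same function.

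The problem is your formula for $a(t)$. You integrated disc sections from $-r$ to $t$ and obtained $a(t)=\tfrac{2\pi r^3}{3}+\pi r^2 t-\tfrac{\pi t^3}{3}$, an \emph{increasing} function with $a(0)=\tfrac{2\pi r^3}{3}$ and $a(r/2)=\tfrac{9\pi r^3}{8}$. The paper's proof instead uses (and the lemma as stated is only true for) the cap on the other side,
\[
a(t)=\frac{\pi}{3}(r-t)^2(2r+t)=\frac{2\pi r^3}{3}-\pi r^2 t+\frac{\pi t^3}{3},\qquad a'(t)=\pi(t^2-r^2)<0,
\]
a \emph{decreasing} function. (The displayed definition in the paper has a sign typo; the condition should read $x_1\ge t$.) With your increasing $a$, the mass concentrates at $t=0$, where $v_0=na(0)=\tfrac{2\pi n r^3}{3}=\tfrac{32}{15}L_n$ for $L_n=\log n+(\tfrac{3k}{2}-1)\log\log n+\xi$. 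Your claimed asymptotic $v_0^{k}e^{-v_0}/(\pi r^2 k!)$ then carries a factor $n^{2/3}e^{-32L_n/15}\asymp n^{-22/15}$, so the whole expression tends to $0$ rather than to the stated constant; the ``bookkeeping'' step you flagged as delicate would in fact fail.

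With the correct (decreasing) $a$, the minimum of $na(t)$ on $[0,r/2]$ is at $t=r/2$, where $na(r/2)=\tfrac{5\pi n r^3}{24}=\tfrac{2}{3}L_n$ and $|a'(r/2)|=\tfrac{3\pi r^2}{4}$. Running your own argument at that endpoint gives
\[
n\int_0^{r/2}\frac{(na(t))^k e^{-na(t)}}{k!}\,dt\ \sim\ \frac{1}{|a'(r/2)|}\cdot\frac{(na(r/2))^k e^{-na(r/2)}}{k!}
=\frac{4}{3\pi r^2}\cdot\frac{\bigl(\tfrac{2}{3}L_n\bigr)^k}{k!}\,n^{-2/3}(\log n)^{-\tfrac{2}{3}(\tfrac{3k}{2}-1)}e^{-2\xi/3},
\]
and now the $n$- and $\log n$-powers cancel exactly (using $r^{-2}\sim(5\pi/16)^{2/3}n^{2/3}(\log n)^{-2/3}$), leaving the constant in the lemma. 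So the fix is simply to replace your cap by the complementary one and move the concentration point from $t=0$ to $t=r/2$; after that your Laplace-type estimate goes through unchanged.
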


{
%
%

\subsection{Case of smooth $\partial \Omega$}
In this subsection, we assume the boundary $\partial \Omega$ be smooth.
Let
$$
     \Omega(0)=\left\{x\in\Omega:\mathrm{dist}(x,\partial\Omega)\geq r\right\}, \quad
     \Omega(2)=\left\{x\in\Omega:\mathrm{dist}(x,\partial\Omega)\leq
     (G(\Omega)+1)r^2\right\},
$$
and define $
     \Omega(1)=\Omega\backslash\left(\Omega(0)\cup\Omega(2)\right).
$
Here constant $G(\Omega)$ is given by Lemma~\ref{lemma:G(Omega)}, and $r$ is considered sufficiently small  so that
 $\Omega(0)$ and $\Omega(2)$ are disjoint.
Clearly,
\begin{equation*}
\begin{split}
&n\int_{\Omega}\psi^k_{n,r}(x)dx\mathrm{d}x=n\left(\int_{\Omega(0)}+\int_{\Omega(2)}+\int_{\Omega(1)}\right)\psi^k_{n,r}(x)dx\mathrm{d}x.
\end{split}
\end{equation*}

\begin{claim}\label{pro:Omega(0)}
 $n\int_{\Omega(0)} \psi^k_{n,r}(x)dx\sim o(1). $
\end{claim}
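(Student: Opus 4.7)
The plan is short because Claim~\ref{pro:Omega(0)} is the ``easy'' piece of the decomposition: on $\Omega(0)$ the ball $B(x,r_n)$ is wholly interior to $\Omega$, so the integrand is a constant that decays much faster than $1/n$. First, I would observe that by the very definition of $\Omega(0)$, any $x\in\Omega(0)$ satisfies $\mathrm{dist}(x,\partial\Omega)\geq r_n$, hence $B(x,r_n)\subset\Omega$ and therefore $|B(x,r_n)\cap\Omega|=\frac{4}{3}\pi r_n^3$. This makes $\psi^k_{n,r_n}(x)$ a constant function of $x$ on $\Omega(0)$, so evaluating the integral reduces to bounding this constant and multiplying by $|\Omega(0)|\leq|\Omega|=1$.

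Next, I would substitute the explicit form of $r_n$ given in~(\ref{eq:Theorem-radius}) into $n\cdot\tfrac{4}{3}\pi r_n^3$. A direct computation gives
$$ n\cdot\tfrac{4}{3}\pi r_n^3 \;=\; \tfrac{4}{3}\pi\cdot\tfrac{16}{5\pi}\Bigl(\log n+\bigl(\tfrac{3k}{2}-1\bigr)\log\log n+\xi\Bigr) \;=\; \tfrac{64}{15}\log n+O(\log\log n). $$
Therefore, on $\Omega(0)$,
$$ \psi^k_{n,r_n}(x) \;=\; \frac{1}{k!}\bigl(\tfrac{64}{15}\log n+O(\log\log n)\bigr)^{k}\exp\!\Bigl(-\tfrac{64}{15}\log n+O(\log\log n)\Bigr) \;=\; O\!\bigl((\log n)^{k}\,n^{-64/15}\bigr). $$

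Finally, multiplying by the extra factor $n$ and by $|\Omega(0)|\leq 1$ yields
$$ n\int_{\Omega(0)}\psi^k_{n,r_n}(x)\,dx \;=\; O\!\bigl(n^{\,1-64/15}(\log n)^{k}\bigr) \;=\; o(1), $$
since $\tfrac{64}{15}>1$ with a comfortable margin; this is exactly the assertion of Claim~\ref{pro:Omega(0)}. I do not anticipate any real obstacle here: the whole argument rests on the fact that the coefficient $\tfrac{64}{15}$ appearing in the $\log n$ factor of $n|B(x,r_n)|$ for a full interior ball exceeds $1$, which is an automatic consequence of the normalisation constant $\tfrac{16}{5\pi}$ chosen in~(\ref{eq:Theorem-radius}). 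The genuinely delicate estimates of Proposition~\ref{pro:Explicit-Form} will instead arise for $x\in\Omega(1)\cup\Omega(2)$, where $|B(x,r_n)\cap\Omega|$ is strictly smaller than $\tfrac{4}{3}\pi r_n^3$ and must be approximated through the tangent-plane / cap geometry developed in Lemmas~\ref{lemma:LowerBound-B(x,r)}--\ref{lemma:a(t)}.
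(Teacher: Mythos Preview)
Your proposal is correct and follows essentially the same approach as the paper: both arguments note that on $\Omega(0)$ the ball is entirely contained in $\Omega$ so the integrand is the constant $\frac{1}{k!}(\tfrac{4}{3}n\pi r_n^3)^k e^{-\tfrac{4}{3}n\pi r_n^3}$, and both conclude $o(1)$ after multiplying by $n|\Omega(0)|$. The only cosmetic difference is that you make the decay rate explicit by computing the coefficient $\tfrac{64}{15}$, whereas the paper simply writes the final expression and asserts $=o(1)$.
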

\begin{proof}
$\forall x\in \Omega(0), |B(x,r)\cap\Omega|=\frac{4}{3}\pi r^3$.  Notice that $|\Omega(0)|\sim (1-\mathrm{Area}(\Omega)r)$.
\begin{eqnarray*}
     n\int_{\Omega(0)} \psi^k_{n,r}(x)dx =\frac n{k!}\left(\frac{4n\pi r^3}{3}\right)^k e^{-\frac{4n\pi r^3}{3}}|\Omega(0)|
      \sim  \frac n{k!}\left(\frac{4n\pi r^3}{3}\right)^k e^{-\frac{4n\pi r^3}{3}} = o(1).
\end{eqnarray*}
\end{proof}

{ \begin{claim}\label{pro:Omega(2)}
$
   n\int_{\Omega(2)} \psi^k_{n,r}(x)dx=o(1).
$
\end{claim}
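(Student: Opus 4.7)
The plan is to show the contribution from the thin boundary layer $\Omega(2)$ is negligible by combining two estimates: the volume of $\Omega(2)$ is only $O(r^2)$, and on $\Omega(2)$ the quantity $|B(x,r)\cap\Omega|$ is bounded below by roughly half of $\frac{4}{3}\pi r^3$, which forces $\psi^k_{n,r}(x)$ to be a very small negative power of $n$.

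First I would observe that $\Omega(2)$ is the tubular neighborhood of $\partial\Omega$ of thickness $(G(\Omega)+1)r^2$; since $\partial\Omega$ is a smooth compact surface with area $\mathrm{Area}(\partial\Omega)$, a standard Fubini/coarea argument gives $|\Omega(2)| \leq \mathrm{Area}(\partial\Omega)\cdot(G(\Omega)+1)r^2\,(1+o(1)) = O(r^2)$. Next, I would establish the uniform lower bound
\begin{equation*}
|B(x,r)\cap\Omega| \geq \tfrac{2}{3}\pi r^3\,(1-o(1)) \qquad \text{uniformly in } x\in\Omega(2).
\end{equation*}
The idea is that the distance from any $x\in\Omega(2)$ to $\partial\Omega$ is at most $(G(\Omega)+1)r^2=o(r)$, so by Lemma~\ref{lemma:G(Omega)} the tangent plane to $\partial\Omega$ at the nearest boundary point approximates $\partial\Omega$ within $B(x,r)$ up to an $O(r^2)$ normal deviation. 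Hence $B(x,r)\cap\Omega$ contains essentially the half-ball cut by that tangent plane, giving the asserted bound. This is the main technical step and is the uniform strengthening of the second half of Lemma~\ref{lemma:LowerBound-B(x,r)}.

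Finally, using that $t\mapsto t^k e^{-t}$ is decreasing on $[k,\infty)$ and that $n\cdot\tfrac{2}{3}\pi r^3 \sim \tfrac{32}{15}\log n \gg k$, I can substitute the lower bound into $\psi^k_{n,r}(x)$ to obtain, uniformly on $\Omega(2)$,
\begin{equation*}
\psi^k_{n,r}(x) = O\!\left((\log n)^k\, n^{-32/15 + o(1)}\right).
\end{equation*}
Combining this with $|\Omega(2)|=O(r^2)=O((\log n/n)^{2/3})$ yields
\begin{equation*}
n\int_{\Omega(2)} \psi^k_{n,r}(x)\,dx = O\!\left(n^{1/3-32/15}(\log n)^{k+2/3}\right) = O\!\left(n^{-9/5+o(1)}(\log n)^{k+2/3}\right) = o(1),
\end{equation*}
as required. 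The only non-routine step is the uniform lower bound on $|B(x,r)\cap\Omega|$ for $x\in\Omega(2)$; everything else is bookkeeping with the explicit form of $r_n$.
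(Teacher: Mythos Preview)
Your proposal is correct and follows essentially the same route as the paper: bound $|\Omega(2)|=O(r^2)$, use a uniform lower bound on $|B(x,r)\cap\Omega|$ to control the exponential, and multiply out. The only difference is that the paper dispatches your ``only non-routine step'' by directly invoking Lemma~\ref{lemma:LowerBound-B(x,r)} (taking the constant $\tfrac{1}{2}$, which suffices in the smooth case) to get $e^{-n|B(x,r)\cap\Omega|}=o(n^{-1})$, rather than re-deriving the sharper $\tfrac{2}{3}\pi r^3(1-o(1))$ bound via the tangent-plane argument of Lemma~\ref{lemma:G(Omega)}; your version gives a better exponent but the cruder one is already enough.
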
}
\begin{proof} Notice that  $|\Omega(2)|\leq \mathrm{Area}(\Omega)\times (G(\Omega)+1)r^2=\Theta(1)r^2
=\Theta\left(\left(\frac{\log n}{n}\right)^\frac{2}{3}\right)$. By Lemma~\ref{lemma:LowerBound-B(x,r)}, there
exists $r_0$ such that $\forall r<r_0$, $|B(x,r)\cap \Omega | > \frac{1}{2}\pi r^3 $, which implies
$\sup_{x\in\Omega(2)} e^{-n|B(x,r)\cap \Omega |}=o(n^{-1})$.

\begin{eqnarray*}
   n\int_{\Omega(2)} \psi^k_{n,r}(x)dx&\leq &\frac n{k!}(n\pi r^3)^k[o(n^{-1})]|\Omega(2)|=o(1).
\end{eqnarray*}
\end{proof}

\begin{figure}[t]
\begin{center}
\subfigure{
\begin{minipage}{6cm}
\includegraphics[scale=0.65] {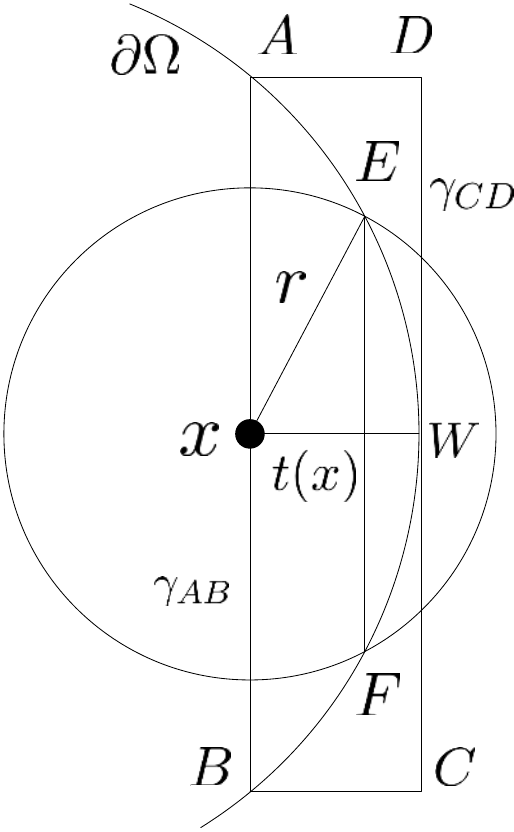}
  \caption{$t(x)$ is nonnegative}\label{positive-at)}
\end{minipage}
}
\subfigure{
\begin{minipage}{6cm}
\includegraphics[scale=0.65]{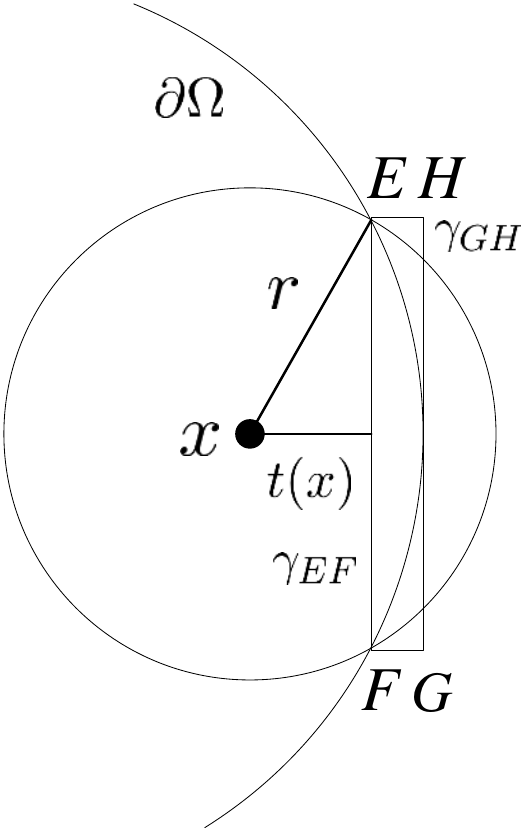}
\caption{Upper bound for $|B(x,r)\cap
       \Omega|$}\label{fig:UpperBound-Ball-Omega}
\end{minipage}
}
\end{center}
\end{figure}

%
%

For any point $x\in \Omega(1)$, let $W\in\partial \Omega$ be the nearest point to $x$, i.e.,
$
  \|xW\|=\mathrm{dist}(x,\partial \Omega)<r.
$
Let plane $\gamma_{CD}$ be tangent to $\partial \Omega$ at point $W$, plane $\gamma_{AB}$  parallel $\gamma_{CD}$ and pass through point $x$. Then $xW\bot\gamma_{CD}$ and $xW\bot\gamma_{AB}$. See Figure~\ref{positive-at)}.
 The distance between these two
planes is $\|xW\|>(G(\Omega)+1)r^2$, then by  Lemma~\ref{lemma:G(Omega)}  we have that
$$
   \mathrm{dist}(x, \partial \Omega \cap \gamma_{AB})>r,
$$
which implies that $\forall A\in\partial\Omega \cap \gamma_{AB}$, $$\|xA\|>r.$$
The distance
between point $x$ and   point $y\in\partial \Omega$ is a continuous
function of $y$, which is due to the smooth
boundary $\partial \Omega$. By $\|xW\|<r$ and
$\|xA\|>r$, we know that any point $E'\in \partial \Omega$ with $\|xE'\|=r$, or any  point $E'\in \partial \Omega\cap \partial B(x,r)$, is between these two planes. Define
$$
    t(x,r)=\inf\{\mathrm{dist}(x,E'F')\mid E',F'\in\partial\Omega\cap\partial B(x,r)\},
$$
then clearly $t(x,r)\geq 0$.  Here $t(x,r)$ is usually shortly denoted by $t(x)$. Because $\partial\Omega\cap\partial B(x,r)$ is compact, we assume $E, F\in\partial\Omega\cap\partial B(x,r)$ such that
$$t(x,r)=\mathrm{dist}(x,EF)=\inf\{\mathrm{dist}(x,E'F')\mid E',F'\in\partial\Omega\cap\partial B(x,r)\}.$$

Furthermore, we  set
$$
    \Omega(1,1)=\left\{x\in \Omega(1):t(x)\leq \frac r2\right\},
\quad
   \Omega(1,2)=\Omega(1)\setminus\Omega(1,1).
$$
In following, we will specify $n\int_{\Omega(1,2)}\psi^k_{n,r}(x)dx$ in
Claim~\ref{pro:Omega(1,2)}, and then  determine $n\int_{\Omega(1,1)}\psi^k_{n,r}(x)dx$ in
Claim~\ref{pro:Omega(1,1)}.
But at first, we will establish a lower and upper bounds for $|B(x,r)\cap\Omega|$ where $x\in\Omega(1)$.

\par
Because $t(x)>0$,
a lower bound is straightforward:
$$
|B(x,r)\cap\Omega|\geq a(t(x))\geq \frac{2}{3}\pi r^3.
$$
Now we give an upper bound for $|B(x,r)\cap \Omega|$.
Let $\gamma_{EF}$ be the plane passing through $EF$ with $\mathrm{dist}(x, \gamma_{EF})=t(x)$.
Plane $\gamma_{GH}$ parallels $\gamma_{EF}$ and tangents to $\partial\Omega$. See Figure~\ref{fig:UpperBound-Ball-Omega}.
 Here we assume $EH\bot\gamma_{GH}$ and
$FG\bot\gamma_{GH}$.

Clearly $\|EF\|<2r$, so $$\mathrm{Area}(B(x,r)\cap\gamma_{EF})\leq \Theta(1) r^2,$$
 and by Lemma~\ref{lemma:G(Omega)}, $\|EH\|<\Theta(1)r^2$. Therefore, the volume of
cylinder $\mathrm{Cyc}(EFGH)$ which is  formed by  surface $ B(x,r) \cap\gamma_{EF}$ and its projection on $\gamma_{FG}$}, satisfies
 $$\mathrm{Vol}(\mathrm{Cyc}(EFGH))=\mathrm{Area}(B(x,r)\cap\gamma_{EF})\|EH\|\leq \Theta (1)r^4.$$
  So
 \begin{eqnarray*}
     &&|B(x,r)\cap \Omega|\leq a(t(x))+\mathrm{Vol}(\mathrm{Cyc}(EFGH))\leq a(t(x))+\Theta(r^4)\leq a(t(x))+\Theta(r^4).
\end{eqnarray*}
With $a(t(x))\geq \frac {2\pi r^3}3$ that has been derived above, we obtain
\begin{eqnarray*}
     &&|B(x,r)\cap \Omega|\leq
     a(t(x))\left(1+\frac{\Theta(r^4)}{a(t(x))}\right)\leq a(t(x))(1+o(1)).
\end{eqnarray*}
According to these  lower and upper bounds of $|B(x,r)\cap \Omega|$, we have the
following estimates:
\begin{equation}\label{eq:local-1}
\begin{split}
     &(n|B(x,r)\cap \Omega|)^ke^{-n|B(x,r)\cap \Omega|}\leq (1+o(1))^k(na(t(x)))^ke^{-na(t(x))}.
\end{split}
\end{equation}
\begin{equation}\label{eq:local-2}
\begin{split}
     & (n|B(x,r)\cap \Omega|)^ke^{-n|B(x,r)\cap \Omega|}\geq (na(t(x)))^ke^{-n(a(t(x))+\Theta(r^4))}\geq  e^{-n \Theta(r^4)}(na(t(x)))^ke^{-na(t(x))}.
\end{split}
\end{equation}
\par Consider the integration on $\Omega(1,2)$. Noticing that volume
$|\Omega(1,2)|\leq \mathrm{Area}(\Omega)r$ and $\frac23 \pi r^3\leq a(t(x))\leq \frac{4}{3}\pi r^3$, then by formula~(\ref{eq:local-1}), we have
\begin{eqnarray*}
     n\int_{\Omega(1,2)} \psi^k_{n,r}(x)dx&=&\frac n{k!}\int_{\Omega(1,2)}\left(n|B(x,r_n)\cap\Omega|\right)^k e^{-n|B(x,r_n)\cap\Omega|}dx\\
      &\leq&\frac n{k!}\left[(1+o(1))^k(na(t(x)))^ke^{-na(t(x))}\right]|\Omega(1,2)|\\
     &\leq &\frac n{k!}(\frac{4}{3}n\pi r^3)^k e^{-\frac {2}{3}n\pi r^3}|\Omega(1,2)|=o(1).
\end{eqnarray*}
This proves
\begin{claim}\label{pro:Omega(1,2)}
$
n\int_{\Omega(1,2)} \psi^k_{n,r}(x)dx=o(1).
$
\end{claim}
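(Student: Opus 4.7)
The plan is to upper-bound $\psi^k_{n,r}(x)$ pointwise on $\Omega(1,2)$ by a quantity independent of $x$ and then multiply by a crude volume estimate. The necessary ingredients are already in hand: inequality~\eqref{eq:local-1} gives $(n|B(x,r)\cap\Omega|)^k e^{-n|B(x,r)\cap\Omega|} \leq (1+o(1))^k(na(t(x)))^k e^{-na(t(x))}$, while the two-sided bound $\tfrac{2}{3}\pi r^3 \leq a(t(x)) \leq \tfrac{4}{3}\pi r^3$ holds throughout $\Omega(1)$ because $0\leq t(x)\leq r$. Since $na(t(x))=\Theta(\log n)$ far exceeds $k$ for large $n$, the map $y\mapsto y^k e^{-y}$ is decreasing in the relevant range; taking the upper bound on the polynomial factor and the lower bound in the exponent yields
\[
(na(t(x)))^k e^{-na(t(x))} \leq \bigl(\tfrac{4}{3}n\pi r^3\bigr)^k e^{-\tfrac{2}{3}n\pi r^3}.
\]

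For the volume I would use $\Omega(1,2)\subset\Omega(1)\subset\{x\in\Omega:\mathrm{dist}(x,\partial\Omega)\leq r\}$ together with the standard tubular-neighbourhood estimate on a smooth surface to conclude $|\Omega(1,2)|\leq \mathrm{Area}(\partial\Omega)\,r\,(1+o(1))$. Combining,
\[
n\int_{\Omega(1,2)}\psi^k_{n,r}(x)\,dx \leq \frac{n(1+o(1))^{k+1}}{k!}\bigl(\tfrac{4}{3}n\pi r^3\bigr)^k e^{-\tfrac{2}{3}n\pi r^3}\,\mathrm{Area}(\partial\Omega)\,r.
\]

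Substituting the explicit $r_n=\bigl(\tfrac{16}{5\pi}\tfrac{\log n+(\tfrac{3k}{2}-1)\log\log n+\xi}{n}\bigr)^{1/3}$ gives $\tfrac{2}{3}n\pi r_n^3=\tfrac{32}{15}\log n+O(\log\log n)$, so $e^{-\tfrac{2}{3}n\pi r_n^3}=n^{-32/15}$ up to polylogarithmic factors, while the remaining prefactor grows only as $n\cdot(\log n)^k\cdot n^{-1/3}(\log n)^{1/3}=n^{2/3}(\log n)^{k+1/3}$. Because $32/15>2/3$ the product tends to zero, which establishes the claim.

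There is no conceptual obstacle; the only nontrivial point is confirming that the exponential suppression $e^{-\tfrac{2}{3}n\pi r_n^3}$ dominates the polynomial prefactor $n(nr_n^3)^k r_n$, and this is secured by the logarithmic scaling built into $r_n$ and by the monotonicity of $y\mapsto y^k e^{-y}$ in the regime $y\gg k$. The crude lower bound $a(t(x))\geq\tfrac{2}{3}\pi r^3$ already suffices on $\Omega(1,2)$, so the defining constraint $t(x)>r/2$ of this piece is not invoked here; it will instead play the decisive role on the complementary set $\Omega(1,1)$, where a much sharper estimate on $a(t(x))$ is needed.
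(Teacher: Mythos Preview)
Your proposal is correct and follows the same route as the paper: bound $\psi^k_{n,r}(x)$ via \eqref{eq:local-1} and the two-sided estimate $\tfrac{2}{3}\pi r^3\le a(t(x))\le\tfrac{4}{3}\pi r^3$, then multiply by $|\Omega(1,2)|\le \mathrm{Area}(\partial\Omega)\,r$. The paper records exactly the same final inequality $\frac{n}{k!}\bigl(\tfrac{4}{3}n\pi r^3\bigr)^k e^{-\tfrac{2}{3}n\pi r^3}|\Omega(1,2)|=o(1)$ and stops there; you additionally spell out the numerical check $\tfrac{2}{3}n\pi r_n^3=\tfrac{32}{15}\log n+O(\log\log n)$ and $32/15>2/3$, and you observe (correctly) that the defining condition $t(x)>r/2$ is not actually exploited here, only the inclusion $\Omega(1,2)\subset\Omega(1)$.
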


\begin{figure}[htbp]
 \begin{center}
       \scalebox{0.55}[0.6]{
           \includegraphics{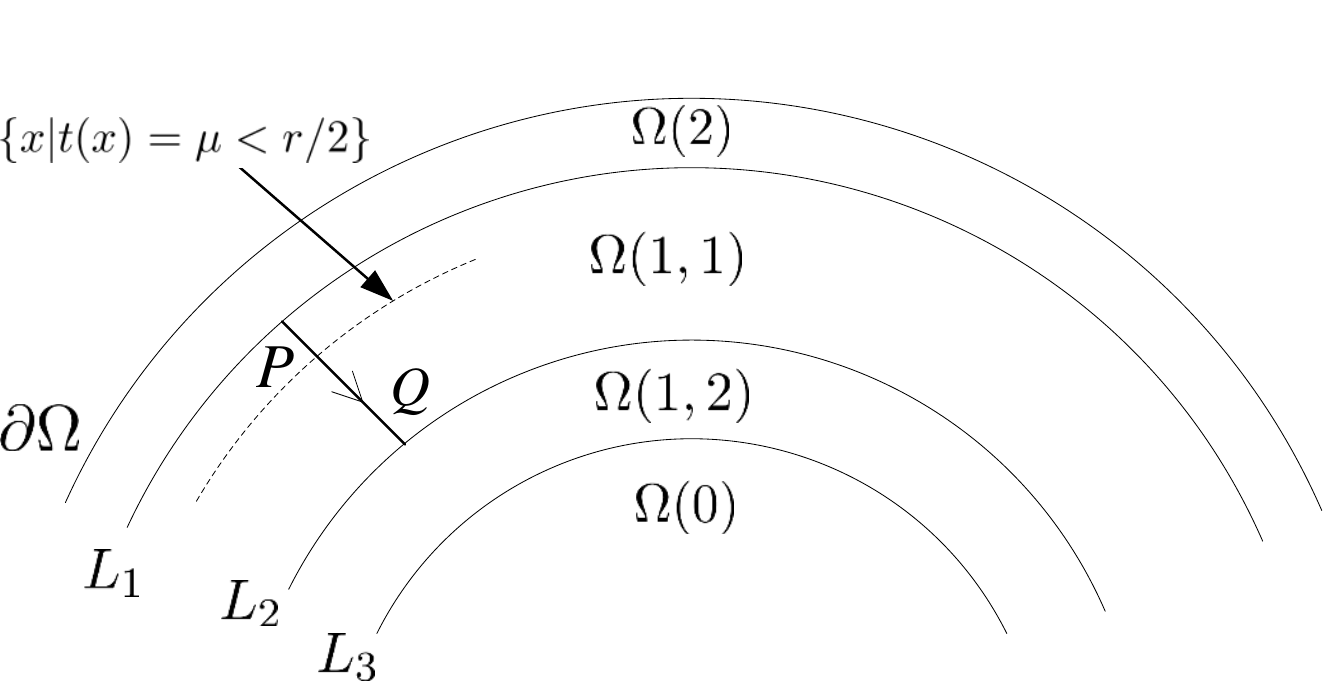}       }
       \caption{$\Omega$ is divided into four parts}\label{fig:Omega(1)}
  \end{center}
  \end{figure}

In the following, we will determine $n\int_{\Omega(1,1)} \psi^k_{n,r}(x)dx$.
By
estimates (\ref{eq:local-1}) and (\ref{eq:local-2}),
\begin{eqnarray*}
     &&\frac n{k!}\int_{\Omega(1,1)}\left(n|B(x,r_n)\cap\Omega|\right)^k e^{-n|B(x,r_n)\cap\Omega|}dx\sim \frac
     n{k!}\int_{\Omega(1,1)}(na(t(x)))^ke^{-na(t(x))}dx.
\end{eqnarray*}

We define
\begin{equation*}\label{eq:L1}
L_1=\{x\in\Omega\mid \mathrm{dist}(x,\partial \Omega)=(G(\Omega)+1)r^2\},
\quad
L_2=\{x\in\Omega(1)\mid t(x)=\frac{r}{2}\},
\end{equation*}
\begin{equation*}\label{eq:L3}
L_3=\{x\in\Omega\mid \mathrm{dist}(x,\partial \Omega)=r\}.
\end{equation*}
Clearly, the subregion $\Omega(1,1)$ has boundaries
 $L_1$ and $L_2$,   illustrated by
Figure~\ref{fig:Omega(1)}. If $x\in L_1$, then $t(x)\geq 0$ by a similar argument as shown in the previous
step. Clearly, $t(x)\leq (G(\Omega)+1)r^2$.
 Therefore, $0\leq t(x)\leq (G(\Omega)+1)r^2$ for  any $x\in L_1$. As $r$
 tends to zero, both surfaces $L_1$ and $L_2$ approximate the
 boundary $\partial \Omega$. So there exists $\epsilon_r$ such
 that  the area of $L_1$ and $L_2$,
 denoted by $\mathrm{Area}(L_1)$ and $\mathrm{Area}(L_2)$ respectively, satisfies:
$$
      \mathrm{Area}(\partial\Omega)-\epsilon_r\leq \mathrm{Area}(L_1),\mathrm{Area}(L_2)\leq \mathrm{Area}(\partial\Omega),
$$
where $\epsilon_r\rightarrow 0$ as $r\rightarrow 0$. In addition, it
is clear that $t(x)$ is increasing along the directed line segment
started from a point in $L_1$ to a point in $L_2$. See the line
segment $PQ$ in Figure~\ref{fig:Omega(1)}.

The integration on   $\Omega(1,1)$  can be bounded as
follows:
\begin{equation}\label{eq:Bounds-Integration-Omega(1,1)}
\begin{split}
&(\mathrm{Area}(\Omega)-\epsilon_r)\frac n{k!}\int_{(G+1)r^2}^{\frac r2}(na(t))^ke^{-na(t)}dt\\
&\leq \frac n{k!}
\int_{\Omega(1,1)}(na(t(x)))^ke^{-na(t(x))}dx\\
&\leq \mathrm{Area}(\Omega)\frac n{k!}\int_0^{\frac r2}(na(t))^ke^{-na(t)}dt.
\end{split}
\end{equation}

According to Lemma~\ref{lemma:a(t)}, we obtain that
\begin{eqnarray*}
  \mathrm{Area}(\Omega)\frac n{k!}\int_0^{\frac r2}(na(t))^ke^{-na(t)} dt\sim\mathrm{Area}(\Omega)\frac{4}{3\pi}e^{-\frac{2\xi}{3}}\left(\frac{5\pi}{16}\right)^{\frac23}\left(\frac{2}{3}\right)^k\frac{1}{k!}. \end{eqnarray*}
Therefore,
\begin{equation*}
\begin{split}
      0&\leq\epsilon_r\int_{(G+1)r^2}^{\frac{r}{2}}\frac
      n{k!}(na(t))^ke^{-na(t)}dt\leq\epsilon_r\int_{0}^{\frac{r}{2}}\frac
      n{k!}(na(t))^ke^{-na(t)}dt=o(1).
\end{split}
\end{equation*}
Furthermore, noticing that $2\pi r^3/3\leq a(t)\leq 2(\pi+\delta)r^3/3$
for any $t\in [0,(G(\Omega)+1)r^2]$, where $\delta>0$ is a small real number, we
have that
$$
      \mathrm{Area}(\Omega)\int_0^{(G+1)r^2}\frac n{k!}(na(t))^ke^{-na(t)}dt=o(1).
$$
So,
\begin{equation*}
\begin{split}
&( \mathrm{Area}(\Omega)-\epsilon_r)\frac n{k!}\int_{(G+1)r^2}^{\frac r2}(na(t))^ke^{-na(t)}dt\\
=&\frac
{n}{k!}\left( \mathrm{Area}(\Omega)\int_0^{\frac{r}{2}}- \mathrm{Area}(\Omega)\int_0^{(G+1)r^2}-\epsilon_r\int_{(G+1)r^2}^{\frac
{r}{2}}\right)(na(t))^ke^{-na(t)}dt\\
=& \mathrm{Area}(\Omega)\frac {n}{k!}\int_0^{\frac{r}{2}}(na(t))^ke^{-na(t)}dt+o(1).
\end{split}
\end{equation*}
Then by formula~(\ref{eq:Bounds-Integration-Omega(1,1)}),
\begin{eqnarray*}
     && \mathrm{Area}(\Omega)\frac {n}{k!}\int_0^{\frac{r}{2}}(na(t))^ke^{-na(t)}dt+o(1)\\
     &=&( \mathrm{Area}(\Omega)-\epsilon_r)\frac n{k!}\int_{(G+1)r^2}^{\frac r2}(na(t))^ke^{-na(t)}dt\\
     &\leq&\frac n{k!}\int_{\Omega(1,1)}(na(t(x)))^ke^{-na(t(x))}dx\\
     &\leq&  \mathrm{Area}(\Omega)\frac n{k!}\int_0^{\frac r2}(na(t))^ke^{-na(t)}dt,
\end{eqnarray*}
which implies that
\begin{eqnarray*}
&&\frac n{k!}\int_{\Omega(1,1)}(na(t(x)))^ke^{-na(t(x))}dx \sim  \mathrm{Area}(\Omega)\frac n{k!}\int_0^{\frac r2}(na(t))^ke^{-na(t)} dt.
\end{eqnarray*}
So, by Lemma~\ref{lemma:a(t)}, we have
\begin{eqnarray*}
     &&\frac n{k!}\int_{\Omega(1,1)}\left(n|B(x,r_n)\cap\Omega|\right)^k e^{-n|B(x,r_n)\cap\Omega|}dx\\
     &\sim &\frac{n}{k!}\int_{\Omega(1,1)}(na(t(x)))^ke^{-na(t(x))}dx\\
     &\sim&  \mathrm{Area}(\Omega)\frac {n}{k!}\int_0^{\frac r2}(na(t))^ke^{-na(t)}dt\\
     &\sim&\mathrm{Area}(\Omega)\frac{4}{3\pi}e^{-\frac{2\xi}{3}}\left(\frac{5\pi}{16}\right)^{\frac23}\left(\frac{2}{3}\right)^k\frac{1}{k!}.
\end{eqnarray*}
Therefore, we have proved the following conclusion:
\begin{claim}\label{pro:Omega(1,1)}
$
     n\int_{\Omega(1,1)} \psi^k_{n,r}(x)dx\sim\mathrm{Area}(\Omega)\frac{4}{3\pi}e^{-\frac{2\xi}{3}}\left(\frac{5\pi}{16}\right)^{\frac23}\left(\frac{2}{3}\right)^k\frac{1}{k!}.
$
\end{claim}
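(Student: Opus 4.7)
The plan is to reduce the three-dimensional integral over $\Omega(1,1)$ to the one-dimensional integral $\int_0^{r/2}(na(t))^k e^{-na(t)}\,dt$ already understood via Lemma~\ref{lemma:a(t)}, weighted by the surface area $\mathrm{Area}(\partial\Omega)$. The key geometric input is the two-sided control $a(t(x))\le |B(x,r)\cap\Omega|\le a(t(x))(1+o(1))$ established in (\ref{eq:local-1})--(\ref{eq:local-2}), which lets me replace $|B(x,r)\cap\Omega|$ by $a(t(x))$ in both the polynomial and exponential factors at the cost of a uniform $(1+o(1))$ multiplier.

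Next I would exploit the monotonicity of $t(x)$ along inward-normal segments between $L_1$ and $L_2$, together with the fact that both surfaces approximate $\partial\Omega$ in area: $\mathrm{Area}(L_i)=\mathrm{Area}(\partial\Omega)+O(\epsilon_r)$ with $\epsilon_r\to 0$. Using the smooth foot-of-perpendicular parametrization $x\mapsto(W,s)$ where $W\in\partial\Omega$ is the nearest boundary point and $s=\mathrm{dist}(x,\partial\Omega)$, and noting that on $\Omega(1,1)$ the depth $s$ is comparable to $t(x)$ up to the curvature correction controlled by Lemma~\ref{lemma:G(Omega)}, the volume integral sandwiches as
\begin{equation*}
(\mathrm{Area}(\partial\Omega)-\epsilon_r)\frac{n}{k!}\int_{(G+1)r^2}^{r/2}(na(t))^k e^{-na(t)}\,dt\;\le\; \frac{n}{k!}\int_{\Omega(1,1)}(na(t(x)))^k e^{-na(t(x))}\,dx\;\le\;\mathrm{Area}(\partial\Omega)\frac{n}{k!}\int_0^{r/2}(na(t))^k e^{-na(t)}\,dt.
\end{equation*}

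I would then invoke Lemma~\ref{lemma:a(t)} to evaluate the upper bound as $\mathrm{Area}(\partial\Omega)\cdot\frac{4}{3\pi}e^{-2\xi/3}(5\pi/16)^{2/3}(2/3)^k/k!$, and show that the truncation over $[0,(G+1)r^2]$ and the $\epsilon_r$-slack contribute $o(1)$: on that short interval $a(t)\in[\tfrac{2}{3}\pi r^3,\tfrac{2}{3}(\pi+\delta)r^3]$, so the integrand is $O(n(nr^3)^k e^{-\frac{2}{3}n\pi r^3})$ and, multiplied by the length $O(r^2)=O((\log n/n)^{2/3})$, vanishes by the choice of $r_n$. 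A squeeze then gives the desired asymptotic.

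The main obstacle is justifying the factorization (surface area)$\times$(one-dimensional $t$-integral) rigorously, since the correct change of variables is $x\mapsto(W,s)$ whose Jacobian involves the principal curvatures of $\partial\Omega$; smoothness of $\partial\Omega$ is exactly what forces that Jacobian to $1$ uniformly as $r\to 0$, and the residual is absorbed into $\epsilon_r$. A secondary subtlety is that the natural monotone coordinate along a normal segment is the perpendicular distance $s$, not $t(x)$; the relation $t(x)=s+O(r^2)$ (via Lemma~\ref{lemma:G(Omega)}) is what lets me replace $s$ by $t$ in the one-dimensional integral at the cost of shifting the lower endpoint from $0$ to $(G+1)r^2$, which has already been shown to cost only $o(1)$.
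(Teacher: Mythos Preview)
Your proposal is correct and follows essentially the same route as the paper: replace $|B(x,r)\cap\Omega|$ by $a(t(x))$ via (\ref{eq:local-1})--(\ref{eq:local-2}), sandwich the resulting integral over $\Omega(1,1)$ between $(\mathrm{Area}(\partial\Omega)-\epsilon_r)\int_{(G+1)r^2}^{r/2}$ and $\mathrm{Area}(\partial\Omega)\int_0^{r/2}$ using the monotonicity of $t(x)$ along segments from $L_1$ to $L_2$ together with the area approximation $\mathrm{Area}(L_i)\to\mathrm{Area}(\partial\Omega)$, and then invoke Lemma~\ref{lemma:a(t)} plus the $o(1)$ truncation estimate to close the squeeze. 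Your discussion of the Jacobian and the relation $t(x)=s+O(r^2)$ is in fact more explicit than what the paper provides---the paper simply asserts the sandwich bound (\ref{eq:Bounds-Integration-Omega(1,1)}) directly---so you have identified and addressed a point the paper leaves implicit.
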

The four claims prove  $\int_{\Omega}n\psi^k_{n,r}(x)dx=\left\{\int_{\Omega(0)}+\int_{\Omega(2)}+\int_{\Omega(1,2)}+\int_{\Omega(1,1)}\right\}
    n\psi^k_{n,r}(x)dx\sim e^{-c}.$

\subsection{Case of continuous $\partial \Omega$}

For a general unit-volume convex region $\Omega$ with a continuous
rather than smooth boundary $\partial \Omega$, we may use a family
of convex regions $\{\Omega_n\}_{n=1}^{\infty}\subset \Omega$ to approximate
$\Omega$, where the boundary $\partial \Omega_n$ of each $\Omega_n$
is smooth. We set the width of the gap between $\partial \Omega$ and
$\partial \Omega_n$ to be less than $r^2_n$,
That is, $\sup_{x\in \Omega_n}\mathrm{dist}(x,\partial
\Omega)<r^2_n$ for any $n$. Clearly, $\mathrm{Vol}(\Omega_n)$, the volume of $\Omega_n$, satisfies
$
    1-\mathrm{Area}(\Omega)r_n^2\leq \mathrm{Vol}(\Omega_n)\leq 1.
$
%
 Because $\partial
\Omega_n$ is smooth, follow the  method presented in the previous
subsection, we can also similarly obtain $
n\int_{\Omega_n}\psi^k_{n,r}(x)dx\sim e^{-c}.
$
Since the volume of $\Omega\setminus \Omega_n$ is no more than $\mathrm{Area}(\Omega)r_n^2$,
then by the proof of Claim~\ref{pro:Omega(2)}, we have that
$
n\int_{\Omega\setminus \Omega_n}\psi^k_{n,r}(x)dx=o(1).
$
Therefore,
$$
n\int_{\Omega}\psi^k_{n,r}(x)dx=n\left(\int_{\Omega_n}+\int_{\Omega\setminus
\Omega_n}\right)\psi^k_{n,r}(x)dx\sim e^{-c}.
$$
This finally  completes the proof of Proposition~\ref{pro:Explicit-Form}.

\section{Proof of Proposition~\ref{pro:Poisson-Version}}\label{sec:Pro-2}
We follow Penrose's approach and framework to prove Proposition~\ref{pro:Poisson-Version}.
 For given $n$, $x,y\in \Omega$, let
  $$
   v_x=|B(x,r)\cap \Omega |, v_y=|B(y,r)\cap \Omega |,  v_{x,y}=|B(x,r)\cap B(y,r)\cap \Omega|,$$
   $$v_{x\backslash y}=v_x-v_{x,y},v_{y\backslash x}=v_y-v_{x,y}.
  $$
Define $I_i=I_i(n)(i=1,2,3)$ as follows:
$$
    I_1=n^2\int_{\Omega} dx\int_{\Omega\cap B(x,3r)}dy \psi^k_{n,r}(y)\psi^{k}_{n,r}(x),
$$
$$
   I_2=n^2\int_{\Omega} dx\int_{\Omega\cap B(x, r)}dy \Pr[Z_1+Z_2=Z_1+Z_3=k-1],
$$
$$
   I_3=n^2\int_{\Omega} dx\int_{\Omega\cap B(x,3r)\backslash B(x,r)}dy \Pr[Z_1+Z_2=Z_1+Z_3=k],
$$
where $Z_1,Z_2,Z_3$ are independent Poisson variables with means
$nv_{x,y},nv_{x\backslash y},nv_{y\backslash x}$ respectively. As Penrose has pointed out in~\cite{Penrose-k-connectivity},
by an argument similar to that of Section~7 of~\cite{Penrose97:longest-edge}, to prove Proposition~\ref{pro:Poisson-Version} it suffices to prove that
$I_1,I_2,I_3\rightarrow 0$ as $n\rightarrow \infty$. Firstly, we give  the following conclusion.

\begin{proposition}\label{proposition1} Under the assumptions of Theorem~\ref{thm:Main3D}, $Z_3$ is a Poisson variable with mean $nv_{y\backslash x}$ defined above, then

  \begin{equation*}
\frac{1}{n\pi r^3}\int_{\Omega}n\psi^k_{n,r}(x)dx\int_{\Omega\cap
B(x,r)}n\Pr(Z_3=k-1)dy=o(1).
\end{equation*}
\end{proposition}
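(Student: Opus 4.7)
The plan is to obtain a uniform bound
\begin{equation*}
J(x) \;:=\; \int_{\Omega\cap B(x,r)} n\Pr(Z_3=k-1)\,dy \;=\; O\!\left(\frac{1}{n^2 r^6}\right)
\end{equation*}
valid for every $x\in\Omega$ (with $r=r_n$ as in Theorem~\ref{thm:Main3D}), and then combine it with the $O(1)$ estimate for $\int_\Omega n\psi^k_{n,r}(x)\,dx$ supplied by Proposition~\ref{pro:Explicit-Form}. Since $nr^3=\Theta(\log n)\to\infty$, dividing by $n\pi r^3$ will immediately give $O(1/(nr^3)^3)=O(1/(\log n)^3)=o(1)$.

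The crucial (and hardest) step is a geometric lower bound on $v_{y\setminus x}$: there exists $c=c(\Omega)>0$ such that for all sufficiently small $r$, every $x\in\Omega$ and every $y\in B(x,r)\cap\Omega$,
\begin{equation*}
v_{y\setminus x} \;\geq\; c\,r^2\,\|xy\|.
\end{equation*}
When $B(x,r)\subset\Omega$, this is an explicit spherical-cap computation giving $v_{y\setminus x}=\pi r^2\|xy\|-\pi\|xy\|^3/12\geq \tfrac{11\pi}{12}r^2\|xy\|$. For $x$ within distance $r$ of $\partial\Omega$, I would reuse the boundary-straightening machinery from Section~\ref{sec:Pro-1}: approximate $\partial\Omega$ locally by its tangent plane at the foot of the perpendicular from $x$, use Lemma~\ref{lemma:G(Omega)} to bound the deviation by $O(r^2)$, and note that $(B(y,r)\setminus B(x,r))\cap\Omega$ still captures at least a fixed positive fraction of the full-space lens $B(y,r)\setminus B(x,r)$, whose volume is $\pi r^2\|xy\|(1+o(1))$. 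Convexity of $\Omega$ ensures the estimate holds uniformly in $x$.

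Given this lower bound, the rest is mechanical. The elementary inequality $\lambda^{k-1}e^{-\lambda}\leq C_k e^{-\lambda/2}$ (trivial for $k=1$) yields $\Pr(Z_3=k-1)\leq C_k' e^{-nv_{y\setminus x}/2}\leq C_k' e^{-(c/2)nr^2\|xy\|}$. Passing to spherical coordinates centered at $x$ and substituting $u=(c/2)nr^2 s$ with $s=\|xy\|$ gives
\begin{equation*}
J(x)\;\leq\; 4\pi n C_k'\int_0^r s^2\,e^{-(c/2)nr^2 s}\,ds \;=\; O\!\left(\frac{1}{n^2 r^6}\right),
\end{equation*}
uniformly in $x$. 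Multiplying by $n\psi^k_{n,r}(x)$, integrating over $\Omega$, applying Proposition~\ref{pro:Explicit-Form}, and finally dividing by $n\pi r^3$ concludes the proof.

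The main obstacle is the uniform geometric lower bound on $v_{y\setminus x}$ near $\partial\Omega$; once secured, the Poisson pmf estimate and the one-dimensional exponential integral deliver the result with substantial room to spare (the final bound is $O(1/(\log n)^3)$, much stronger than merely $o(1)$).
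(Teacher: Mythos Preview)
Your key geometric claim---that $v_{y\setminus x}\geq c\,r^2\|xy\|$ uniformly in $x,y$---is false near the boundary, and this is precisely the configuration that drives the paper's argument. Take $\partial\Omega$ locally to be the plane $\{z=0\}$ with $\Omega=\{z>0\}$, put $y=(0,0,0)\in\partial\Omega$ and $x=(0,0,d)$ with $0<d<r$. A direct slice-by-slice computation (which is exactly the content of the paper's Lemma~\ref{lem:shadow-low-bound}) gives
\[
v_{y\setminus x}=\bigl|(B(y,r)\setminus B(x,r))\cap\{z>0\}\bigr|=\int_0^{d/2}\pi\bigl(d^2-2hd\bigr)\,dh=\frac{\pi d^3}{4},
\]
whereas the full lune has volume $\pi r^2 d-\pi d^3/12$. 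Hence the fraction of the lune lying in $\Omega$ is $\Theta(d^2/r^2)$, not a fixed positive constant. Geometrically this is clear: the lune $B(y,r)\setminus B(x,r)$ sits on the side of $y$ opposite to $x$, and when $x$ lies on the inward normal that side is exactly the exterior of $\Omega$. Shifting $y$ inward by $O(r^2)$ does not repair this; one still gets $v_{y\setminus x}=\Theta(d^3+d^2r^2+dr^4)$, which is $o(r^2 d)$ for small $d$.

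The paper confronts this head-on by splitting $B(x,r)\cap\Omega$ into three pieces: a small ball $\Gamma_1$ of radius $d_0$ (with $n d_0^3\asymp(nr^3)^{1/3}$), a thin boundary layer $\Gamma_3$ of width $(G(\Omega)+1)r^2$, and the remainder $\Gamma_2$. On $\Gamma_1$ and $\Gamma_3$ one simply uses $\Pr(Z_3=k-1)\leq 1$ together with the small volumes $|\Gamma_1|=\Theta(d_0^3)$ and $|\Gamma_3|=O(r^4)$. Only on $\Gamma_2$ does one invoke a lower bound on $v_{y\setminus x}$, and there the \emph{correct} order $v_{y\setminus x}\geq \Theta(d_0^3)$ (not $r^2 d_0$) is used, via Lemma~\ref{lem:shadow-low-bound}. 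Your integral framework would in fact go through with the weaker but true bound $v_{y\setminus x}\geq c\,\|xy\|^3$ for $y\in\Gamma_2$ (it yields $J(x)=O(1)$, still enough after division by $n\pi r^3$), but you cannot avoid peeling off the thin boundary layer $\Gamma_3$ where even that cubic bound is unavailable.
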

\begin{proof}
The proof is shown in Appendix~B.
\end{proof}

The conclusion of $I_1,I_2,I_3\rightarrow 0$, as
$n\rightarrow 0$, will be proved by the following three claims.

\begin{claim}
$I_1\rightarrow 0$ as $n\rightarrow \infty$.
\end{claim}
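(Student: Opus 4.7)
The plan is to factor $I_1$ as an outer integral governed by Proposition~\ref{pro:Explicit-Form} multiplied by an inner integral that is uniformly $o(1)$ in $x$. Specifically, rewrite
\begin{equation*}
I_1 \;=\; n\int_{\Omega}\psi^k_{n,r}(x)\,\Bigl[\,n\!\int_{\Omega\cap B(x,3r)}\psi^k_{n,r}(y)\,dy\Bigr]\,dx.
\end{equation*}
Proposition~\ref{pro:Explicit-Form} already yields $n\int_{\Omega}\psi^k_{n,r}(x)\,dx\sim e^{-c}=O(1)$, so it remains to show that the bracketed factor tends to $0$ uniformly in $x\in\Omega$.

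For the uniform estimate I would first note the crude volume bound $|B(x,3r)\cap\Omega|\le\frac{4}{3}\pi(3r)^3=36\pi r^3$. Then I would control $\psi^k_{n,r}(y)$ from above uniformly in $y\in\Omega$: Lemma~\ref{lemma:LowerBound-B(x,r)} supplies a constant $C>0$ with $|B(y,r)\cap\Omega|\ge C\pi r^3$ for every $y\in\Omega$ and all sufficiently small $r$, so $nv_y\ge C\pi n r^3$. With the choice~(\ref{eq:Theorem-radius}) we have $nr^3=\frac{16}{5\pi}(\log n+O(\log\log n))$, which makes $nv_y\to\infty$ and in particular $nv_y>k$ for $n$ large. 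Since $u\mapsto u^ke^{-u}$ is monotonically decreasing on $[k,\infty)$, this gives
\begin{equation*}
\sup_{y\in\Omega}\psi^k_{n,r}(y)\;\le\;\frac{(C\pi n r^3)^k e^{-C\pi n r^3}}{k!}\;=\;O\bigl((\log n)^{k}\,n^{-16C/5}\bigr).
\end{equation*}
Multiplying by $n\cdot 36\pi r^3=\Theta(\log n)$ yields
\begin{equation*}
n\!\int_{\Omega\cap B(x,3r)}\psi^k_{n,r}(y)\,dy\;=\;O\bigl((\log n)^{k+1}\,n^{-16C/5}\bigr)\;=\;o(1),
\end{equation*}
uniformly in $x$. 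Combined with $n\int_\Omega\psi^k_{n,r}(x)\,dx=O(1)$ this gives $I_1=O(1)\cdot o(1)=o(1)$.

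The only nontrivial step is the uniform exponential decay of $\psi^k_{n,r}(y)$, and the main obstacle is verifying that the constant $C$ from Lemma~\ref{lemma:LowerBound-B(x,r)} suffices even when $y$ sits near a point where the boundary is only continuous (not smooth). In the smooth case Lemma~\ref{lemma:LowerBound-B(x,r)} gives $C=1/2$; in the purely continuous case one argues as in the last subsection of Section~\ref{sec:Pro-1}, approximating $\Omega$ from inside by smooth convex $\Omega_n$ of volume $\ge 1-\mathrm{Area}(\partial\Omega)r_n^2$ and handling $\Omega\setminus\Omega_n$ separately by the same crude bound, since on that thin shell $v_y$ is still $\Theta(r^3)$ and its volume is $O(r^2)$. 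Either way the exponential $n^{-16C/5}$ beats every polylogarithmic factor, so the bound is safe.
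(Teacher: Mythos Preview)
Your proposal is correct and follows essentially the same approach as the paper: factor $I_1$ into the outer integral $n\int_\Omega\psi^k_{n,r}(x)\,dx\sim e^{-c}$ (Proposition~\ref{pro:Explicit-Form}) times a uniformly bounded inner integral, and kill the inner integral with the lower bound $|B(y,r)\cap\Omega|\ge C\pi r^3$ from Lemma~\ref{lemma:LowerBound-B(x,r)} together with $|B(x,3r)|=\Theta(r^3)$. The paper bounds $(nv_y)^k$ and $e^{-nv_y}$ separately (using the upper and lower volume bounds respectively), whereas you invoke the monotonicity of $u\mapsto u^ke^{-u}$ on $[k,\infty)$; both give the same $(\log n)^{k+1}n^{-\Theta(1)}$ estimate. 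Your final paragraph worrying about the non-smooth boundary is unnecessary: Lemma~\ref{lemma:LowerBound-B(x,r)} is stated for arbitrary bounded convex $\Omega$, so the constant $C$ is already available without any smoothness assumption or approximation argument.
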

\begin{proof}
By Lemma~\ref{lemma:LowerBound-B(x,r)}, there exists a constant $C>0$  such that $C\pi r^3\leq |B(x,r)\cap\Omega|\leq \frac{4}{3}\pi r^3$.
 \begin{eqnarray*}
 I_1
&=&n^2\int_{\Omega} dx\int_{{\Omega}\cap B(x,3r)}dy \psi^k_{n,r}(y)\psi^{k}_{n,r}(x)\\
    &=& \int_{\Omega} n\psi^{k}_{n,r}(x)dx \int_{B(x,3r)\cap \Omega}n\psi^{k}_{n,r}(y)dy\\
    &\leq& \Theta(1)\int_{\Omega} n\psi^{k}_{n,r}(x)dx \int_{B(x,3r)\cap \Omega}\frac{1}{k!} n(n\pi r^3)^k\exp(-Cn\pi r^3)\\
    &\leq & \Theta(1)\frac{n}{k!}|B(x,3r)|(n\pi r^3)^{k}\exp(-Cn\pi r^3)\int_{\Omega} n\psi^{k}_{n,r}(x)dx\\
    &=& \Theta(1)(n\pi r^3)^{k+1}\exp(-Cn\pi r^3)\int_{\Omega} n\psi^{k}_{n,r}(x)dx\\
    &\sim& \Theta(1)(n\pi r^2)^{k+1}\exp(-Cn\pi r^3)=o(1).
\end{eqnarray*}
\end{proof}

\begin{claim}
$I_2\rightarrow 0$, as $n\rightarrow \infty$.
\end{claim}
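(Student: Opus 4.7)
The plan is to bound the joint probability $\Pr[Z_1+Z_2=Z_1+Z_3=k-1]$ by a quantity proportional to $\frac{1}{nr^3}\psi^k_{n,r}(x)\Pr[Z_3\leq k-1]$, after which the resulting integral is controlled by Proposition~\ref{proposition1}. The extra factor $1/(nr^3)\sim 1/\log n$ is precisely what is needed to turn a bounded integral into an $o(1)$ one.

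First, by independence of $Z_1,Z_2,Z_3$ together with the Poisson thinning identity $\Pr[Z_1=j]\Pr[Z_2=k-1-j]=\binom{k-1}{j}(v_{x,y}/v_x)^{j}(v_{x\setminus y}/v_x)^{k-1-j}\Pr[Z_1+Z_2=k-1]$, one can express
\[
\Pr[Z_1+Z_2=Z_1+Z_3=k-1]=\Pr[Z_1+Z_2=k-1]\sum_{j=0}^{k-1}\binom{k-1}{j}\Bigl(\frac{v_{x,y}}{v_x}\Bigr)^{\!j}\Bigl(\frac{v_{x\setminus y}}{v_x}\Bigr)^{\!k-1-j}\Pr[Z_3=k-1-j].
\]
Since the multinomial weights sum to one, this is bounded above by $\Pr[Z_1+Z_2=k-1]\cdot \Pr[Z_3\leq k-1]$. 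Rewriting $\Pr[Z_1+Z_2=k-1]=\frac{k}{nv_x}\psi^{k}_{n,r}(x)$ and invoking Lemma~\ref{lemma:LowerBound-B(x,r)} to produce $nv_x\geq C n\pi r^3$, we obtain
\[
\Pr[Z_1+Z_2=Z_1+Z_3=k-1]\leq \frac{k}{C n\pi r^3}\,\psi^{k}_{n,r}(x)\sum_{m=0}^{k-1}\Pr[Z_3=m].
\]

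Substituting into the defining expression for $I_2$ yields
\[
I_2\leq \frac{k}{C\pi\,nr^3}\sum_{m=0}^{k-1}\int_\Omega n^2\psi^{k}_{n,r}(x)\int_{\Omega\cap B(x,r)}\Pr[Z_3=m]\,dy\,dx.
\]
For $m=k-1$, the inner double integral is $o(n\pi r^3)$ by Proposition~\ref{proposition1}, so that summand contributes $o(1)$. The remaining summands $m<k-1$ are handled by the same boundary-decomposition scheme used in Appendix~B to prove Proposition~\ref{proposition1}, since the argument depends on $m$ only through the polynomial factor $(nv_{y\setminus x})^{m}/m!$ while the driving exponential $e^{-nv_{y\setminus x}}$ is unchanged. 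Summing the finitely many $o(1)$ contributions gives $I_2\to 0$.

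The principal difficulty is verifying that the estimate underlying Proposition~\ref{proposition1} extends uniformly to every $m\in\{0,\ldots,k-1\}$. If one prefers to use Proposition~\ref{proposition1} only as a black box, an equivalent route is to retain $\sum_m\Pr[Z_3=m]=\Pr[Z_3\leq k-1]$ as a single block and estimate $\int_{\Omega\cap B(x,r)}\Pr[Z_3\leq k-1]\,dy$ directly: using the expansion $v_{y\setminus x}\sim \pi r^2\|xy\|$ near $\|xy\|=0$ and the substitution $\mu=nv_{y\setminus x}$ (with volume element $\rho^2 d\rho\sim \mu^2\,d\mu/(n\pi r^2)^3$), one obtains $\int_{\Omega\cap B(x,r)}\Pr[Z_3\leq k-1]\,dy=\Theta(1/(nr^2)^3)=\Theta(r^3/(\log n)^3)$, which combined with the bound above yields $I_2=O(1/(\log n)^3)=o(1)$.
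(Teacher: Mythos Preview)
Your main argument is correct and mirrors the paper's proof: both bound the joint probability by $\Pr[Z_1+Z_2=k-1]\sum_{j}\Pr[Z_3=k-1-j]$ (you via the thinning identity, the paper via the one-line observation that $Z_3$ is independent of $(Z_1,Z_2)$), then convert $\psi^{k-1}_{n,r}(x)$ to $\frac{k}{Cn\pi r^3}\psi^{k}_{n,r}(x)$ via Lemma~\ref{lemma:LowerBound-B(x,r)}, and finish by invoking Proposition~\ref{proposition1} for each $m\leq k-1$. You correctly flag that Proposition~\ref{proposition1} is stated only for $m=k-1$ but that its proof works verbatim for every $m$; the paper makes the same implicit use.

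One caveat on your alternative ``black-box'' route: the expansion $v_{y\setminus x}\sim\pi r^2\|xy\|$ is valid only when $B(x,r)\cup B(y,r)\subset\Omega$. Near $\partial\Omega$, $v_{y\setminus x}$ can be much smaller than $\pi r^2\|xy\|$ (take $y$ close to $\partial\Omega$ with $B(y,r)\cap\Omega$ almost entirely inside $B(x,r)$), so the uniform estimate $\int_{\Omega\cap B(x,r)}\Pr[Z_3\leq k-1]\,dy=\Theta((nr^2)^{-3})$ fails for boundary $x$. This is exactly why the $\Gamma_1/\Gamma_2/\Gamma_3$ decomposition in Appendix~B is needed, and why your primary route---which reuses that decomposition---is the one that actually goes through.
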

\begin{proof}
  It is obvious that
$Z_1+Z_2$ and $Z_3$ are Possison variables with means $nv_x$ and
$nv_{y\backslash x}$ respectively.
Notice that
\begin{eqnarray*}
&&\Pr[Z_1+Z_3=k-1| Z_1+Z_2=k-1]\\
&\leq &\sum_{j=0}^{k-1}\Pr[Z_3=k-1-j |Z_1+Z_2=k-1]\\
&=&\sum_{j=0}^{k-1}\Pr[Z_3=k-1-j].
\end{eqnarray*}

For any $x\in\Omega$, $\psi^{k}_{n,r}(x)=\frac{n|B(x,r)\cap \Omega|}{k}\psi^{k-1}_{n,r}(x),$
then by Lemma~\ref{lemma:LowerBound-B(x,r)}, there exists a constant $C>0$ such that
$C\pi r^3\leq |B(x,r)\cap \Omega|$, and thus
$ \psi^{k-1}_{n,r}(x)\leq \frac{k}{Cn\pi r^3}\psi^{k}_{n,r}(x).$
Therefore,
\begin{eqnarray*}
    I_2
    &=&n^2\int_{\Omega} dx\int_{\Omega\cap B(x,r)}dy \Pr[Z_1+Z_2=Z_1+Z_3=k-1]\\
       &=&n^2\int_{\Omega} dx\int_{\Omega\cap B(x,r)}dy
       \Pr[Z_1+Z_2=k-1]  \Pr[Z_1+Z_3=k-1|Z_1+Z_2=k-1]\\
       &=& n \int_{\Omega}\Pr[Z_1+Z_2=k-1] dx \int_{\Omega\cap B(x,r)}n\Pr[Z_1+Z_3=k-1|Z_1+Z_2=k-1]dy\\
       &\leq & n\int_{\Omega}\Pr[Z_1+Z_2=k-1] dx \left\{\sum_{j=0}^{k-1}\int_{\Omega\cap B(x,r)}n\Pr[Z_3=k-1-j]dy\right\}\\
       &=& \sum_{j=0}^{k-1} \int_{\Omega}n\psi^{k-1}_{n,r}(x) dx \int_{\Omega\cap B(x,r)}n\Pr[Z_3=k-1-j]dy\\
       &\leq& \frac{C}{k} \sum_{j=0}^{k-1}
       \frac{1}{n\pi r^3}\int_{\Omega}n\psi^{k}_{n,r}(x) dx \int_{\Omega\cap B(x,r)}n\Pr[Z_3=k-1-j]dy=o(1).
\end{eqnarray*}
The last equation holds due to Proposition~\ref{proposition1} and the proved conclusion $\int_{\Omega}n\psi^{k}_{n,r}(x) dx \sim
e^{-c}$.

\end{proof}

Similarly, we can prove
\begin{claim}
$I_3\rightarrow 0$, as $n\rightarrow \infty$.
\end{claim}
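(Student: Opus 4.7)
The plan is to mirror the proof of $I_2\to 0$, swapping the ball $\Omega\cap B(x,r)$ for the spherical shell $\Omega\cap B(x,3r)\setminus B(x,r)$. First I would write the joint Poisson probability as
$$
\Pr[Z_1+Z_2=Z_1+Z_3=k]=\Pr[Z_1+Z_2=k]\cdot\Pr[Z_1+Z_3=k\mid Z_1+Z_2=k],
$$
and bound, exactly as in the $I_2$ argument,
$$
\Pr[Z_1+Z_3=k\mid Z_1+Z_2=k]=\sum_{j=0}^{k}\Pr[Z_1=j\mid Z_1+Z_2=k]\Pr[Z_3=k-j]\leq \sum_{j=0}^{k}\Pr[Z_3=k-j].
$$
Recognising $\Pr[Z_1+Z_2=k]=\psi^{k}_{n,r}(x)$, this yields
$$
I_3\leq \sum_{j=0}^{k}\int_{\Omega}n\psi^{k}_{n,r}(x)\,dx\int_{\Omega\cap B(x,3r)\setminus B(x,r)}n\Pr[Z_3=k-j]\,dy.
$$
Since Proposition~\ref{pro:Explicit-Form} gives $\int_{\Omega}n\psi^{k}_{n,r}(x)\,dx\sim e^{-c}$, the outer factor is $O(1)$, and it suffices to show the inner integral is $o(1)$ uniformly in $x$.

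The crucial input is a uniform geometric lower bound: for $y\in\Omega$ with $\|xy\|\in(r,3r]$ one has $v_{y\setminus x}\geq C_{0}r^{3}$ for some constant $C_{0}>0$. This follows by combining the elementary lens-volume identity $|B(x,r)\cap B(y,r)|=\tfrac{\pi}{12}(2r-d)^{2}(d+4r)\leq \tfrac{5\pi}{12}r^{3}$, valid as soon as $\|xy\|=d\geq r$, with Lemma~\ref{lemma:LowerBound-B(x,r)} giving $v_{y}\geq \tfrac{1}{2}\pi r^{3}$ in the smooth case, so that $v_{y\setminus x}=v_{y}-v_{x,y}\geq \tfrac{\pi}{12}r^{3}$; the general continuous-boundary case is reduced to the smooth one via the approximation scheme of Subsection~2.2. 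Granted this bound, $nv_{y\setminus x}\geq c_{1}\log n$ for some $c_{1}>0$ (since $nr^{3}\sim \tfrac{16}{5\pi}\log n$), and using that $\lambda^{k-j}e^{-\lambda}$ is decreasing in $\lambda$ once $\lambda\geq k-j$, one obtains uniformly over the annulus
$$
\Pr[Z_{3}=k-j]\leq \frac{(nv_{y\setminus x})^{k-j}}{(k-j)!}e^{-nv_{y\setminus x}}=O\!\left((\log n)^{k-j}n^{-c_{1}}\right).
$$
Since $|B(x,3r)\setminus B(x,r)|=\Theta(r^{3})=\Theta(\log n/n)$, each inner integral is bounded by $O((\log n)^{k-j+1}n^{-c_{1}})=o(1)$, and $I_3=o(1)$ follows.

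I expect the main obstacle to be precisely the uniform geometric lower bound $v_{y\setminus x}\geq C_{0}r^{3}$. In the smooth case the calculation is clean, but when $\partial\Omega$ is merely continuous a corner point $y$ may satisfy $v_{y}<\tfrac{5\pi}{12}r^{3}$, so the naive subtraction fails and one must lift the bound from the smooth case via the same approximation argument already employed for Proposition~\ref{pro:Explicit-Form}. Apart from this geometric input, the probabilistic argument is a direct transcription of the $I_{2}$ proof, with the conditioning on $Z_1+Z_2=k$ (rather than $k-1$) making the outer integral $\int_{\Omega}n\psi^{k}_{n,r}(x)\,dx$ itself bounded, so that, unlike in the $I_{2}$ case, no appeal to an auxiliary proposition analogous to Proposition~\ref{proposition1} is needed.
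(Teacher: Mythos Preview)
Your approach is correct and is precisely what the paper's one-line ``Similarly'' encodes: the same conditioning and the bound $\Pr[Z_1+Z_3=k\mid Z_1+Z_2=k]\le\sum_{j=0}^k\Pr[Z_3=k-j]$ as in the $I_2$ proof, followed by control of the inner integral. Your observation that the outer factor is already $\int_\Omega n\psi^k_{n,r}(x)\,dx\sim e^{-c}$ (so no $\psi^{k-1}\!\to\!\psi^k$ conversion and no extra $1/(n\pi r^3)$ are needed) is exactly why $I_3$ is easier than $I_2$, and your direct lens-volume estimate $v_{y\setminus x}\ge v_y-\tfrac{5\pi}{12}r^3\ge\tfrac{\pi}{12}r^3$ on the annulus is a valid shortcut replacing an explicit analogue of Proposition~\ref{proposition1}.

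One refinement on your non-smooth caveat: the approximation scheme of Subsection~2.2 only changes the domain of the outer $x$-integration, not the pointwise quantity $v_{y\setminus x}=|B(y,r)\cap\Omega\setminus B(x,r)|$, so it cannot by itself restore a uniform lower bound. Indeed, at a genuine vertex (take $\Omega$ a cube, $y$ a corner, and $x$ at distance $r$ along the interior diagonal) one checks that the entire octant $B(y,r)\cap\Omega$ lies in $\overline{B(x,r)}$, so $v_{y\setminus x}=0$ and the naive subtraction fails outright. The honest repair is to split off a thin boundary strip and bound its contribution by volume, in the spirit of the $\Gamma_3$ step of the proof of Proposition~\ref{proposition1}. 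That said, the paper's entire Section~\ref{sec:Pro-2} already leans on Lemma~\ref{lemma:G(Omega)} (hence smoothness) and never revisits the continuous-boundary case, so your treatment is at the paper's own level of rigor.
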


Therefore, by these three claims,  we know the Poissonized version (\ref{eq:concusion-1-Poisson}) holds. The proof of Proposition~\ref{pro:Poisson-Version} is completed.

\section{Proofs of Proposition~\ref{pro:conclusion-1} and~\ref{pro:conclusion-2} }


Proposition~\ref{pro:Poisson-Version} can lead to Proposition~\ref{pro:conclusion-1} by a de-Poissonized technique.  The de-Poissonized technique is standard and thus omitted here, please see~\cite{Penrose-k-connectivity} for details.

\par Here we sketch the proof of Proposition~\ref{pro:conclusion-2}. Penrose has clearly proved this result when region $\Omega$ is a square~\cite{Penrose-k-connectivity}.  He  constructed two events, $E_n(K)$ and $F_n(K)$,  such that for any $K>0$,
$$
     \left\{\rho(\chi_n;\delta\geq k+1)\leq r_n<\rho(\chi_n;\kappa\geq
k+1)\right\}\subseteq E_n(K)\cup F_n(K),
$$
and
$$
\lim_{n\rightarrow \infty}\Pr[E_n(K)]=\lim_{n\rightarrow \infty}\Pr[F_n(K)]=0.
$$
The definition of the events and the convergence results are organised in
 Proposition~5.1 and~5.2 of~\cite{Penrose-k-connectivity}. We do not introduce them in detail. Please refer to~\cite{Penrose-k-connectivity}.
These conclusions can be  straightly generalised  to the case of
convex region, with
their proofs not much been modified. In fact, we have proved  Proposition~\ref{pro:Explicit-Form}:
 $n\int_{\Omega} \psi^k_{n,r}(x)dx\sim e^{-c}$, and
$|B(x,r)\cap\Omega|$ in  $$
       \psi^k_{n,r}(x)= \frac{\left(n|B(x,r)\cap\Omega|\right)^k
       e^{-n|B(x,r)\cap\Omega|}}{k!}
$$
can be bounded by $C\pi r^3\leq|B(x,r)\cap\Omega|\leq \frac{4}{3}\pi r^3$ where $C>0$, seeing Lemma~\ref{lemma:LowerBound-B(x,r)}.

As a result, based on   two generalised conclusions, a squeezing argument can lead to the following
$$\lim_{n\rightarrow\infty}\Pr\left\{\rho(\chi_n;\delta\geq k+1)=\rho(\chi_n;\kappa\geq
k+1)\right\}=1.$$
Please see the details presented in~\cite{Penrose-k-connectivity}.

\section*{References}
\bibliographystyle{elsarticle-num}
\bibliography{RGG3DFeb5}

\appendix

\section{Proofs of Lemma~\ref{lemma:LowerBound-B(x,r)}, Lemma~\ref{lemma:G(Omega)}  and Lemma~\ref{lemma:a(t)} }
The proofs of Lemma~\ref{lemma:LowerBound-B(x,r)} and Lemma~\ref{lemma:G(Omega)} are elementary and and similar to the ones presented in~\cite{Ding-RGG2018}. So they are omitted here. The following is the proof of Lemma~\ref{lemma:a(t)}.

\begin{proof} (Proof of Lemma~\ref{lemma:a(t)}). For convenience, we denote $C_k=\frac{3k}{2}-1$. Notice that
$$
    a(t)=\frac{\pi}{3}(r-t)^2(2r+t), $$
$$
   a'(t)=\pi(t^2-r^2), \quad  a''(t)=2\pi t.
$$
Let $f(t)=na(t)$, then
$$
   n\int_{0}^{\frac r2} \frac{(f(t))^ke^{-f(t)}}{k!}dt=-\frac{1}{a'(t)}e^{-f(t)}\sum_{k=0}^{k}\frac{(f(t))^i}{i!}|_{0}^{\frac r2}
   -\int_{0}^{\frac r2} \frac{a''(t)}{(a'(t))^2}e^{-f(t)}\sum_{i=0}^{k}\frac{(f(t))^i}{i!}dt.
$$
Notice that
$$
   f\left(\frac r2\right)=\frac{5n\pi r^3}{24}=\frac{5n\pi}{24}\frac{16}{5\pi}\frac{\log n+C_k\log\log n+\xi}{n}=\frac{2}{3}\left(\log n+C_k\log\log n+\xi\right).
$$
$$ e^{-f\left(\frac r2\right)}=\frac{1}{n^{\frac{2}{3}}}\frac{1}{(\log n)^{\frac{2}{3}C_k}}e^{-\frac{2\xi}{3}}.
$$

(i) First term:
$$
    -\frac{1}{a'\left(\frac{r}{2}\right)}e^{-f\left(\frac r2\right)}=\frac{1}{\frac{3}{4}\pi r^2}\frac{1}{n^{\frac{2}{3}}}\frac{1}{(\log n)^{\frac{2}{3}}}e^{-\frac{2\xi}{3}}=\frac{4}{3\pi}e^{-\frac{2\xi}{3}}\frac{\left(\frac{5\pi}{16}\frac{ n }{\log n+C_k\log\log n}\right)^{\frac23}}{n^{\frac{2}{3}}(\log n)^{\frac{2}{3}C_k}}
$$
$$
    -\frac{1}{a'\left(\frac{r}{2}\right)}e^{-f\left(\frac r2\right)}\sum_{i=0}^k\frac{f\left(\frac r2\right)^i}{i!}=\frac{4}{3\pi}e^{-\frac{2\xi}{3}}\frac{\left(\frac{5\pi}{16}\frac{ n }{\log n+ C_k\log\log n+\xi}\right)^{\frac23 }}{n^{\frac{2}{3}}(\log n)^{\frac{2}{3}C_k}}\sum_{i=0}^k\left(\frac{2}{3}\left(\log n+C_k\log\log n+\xi\right)\right)^i/i!
$$
Because $C_k=\frac{3}{2}k-1$, so $\left(\log n\right)^{\frac23+\frac23C_k}=\left(\log n\right)^k$, and
$$
    -\frac{1}{a'\left(\frac{r}{2}\right)}e^{-f\left(\frac r2\right)}\sum_{i=0}^k\frac{f\left(\frac r2\right)^i}{i!}
    \sim-\frac{1}{a'\left(\frac{r}{2}\right)}e^{-f\left(\frac r2\right)}\frac{f\left(\frac r2\right)^k}{k!}\sim
   \frac{4}{3\pi}e^{-\frac{2\xi}{3}}\left(\frac{5\pi}{16}\right)^{\frac23}\left(\frac{2}{3}\right)^k\frac{1}{k!}.
$$

(ii) Second term.  Notice that
$$
    a'(0)=-\pi r^2, \quad f\left(0\right)=\frac{32}{15}\left(\log n+C_k\log\log n+\xi\right), \quad  e^{-f(0)}=\frac{1}{n^{\frac{32}{15}}}\frac{1}{(\log n)^{\frac{32}{15}C_k}}e^{-\frac{32\xi}{15}}.
$$
It is easy to see that
$$
   -\frac{1}{a'\left(0\right)}e^{-f\left(0\right)}f\left(0\right)^k=o(1).
$$

(iii) Third term.
$$
   a'(t)=\pi(t^2-r^2), \quad  a''(t)=2\pi t, $$
 When $t\leq \frac{r}{2}$,
   $$
  \left|\frac{a''(t)}{(a'(t))^3}\right|=\frac{2}{\pi^2}\frac{t}{(r^2-t^2)^3}\leq \frac{64}{27\pi^2}\frac{1}{r^5}
  =\Theta\left(\left(\frac{n}{\log n+C_k\log\log n+\xi}\right)^{\frac{5}{3}}\right).
$$
Then
\begin{eqnarray*}
 &&\left|\int_{0}^{\frac r2} \frac{a''(t)}{(a'(t))^2}e^{-f(t)}\sum_{i=0}^{k}\frac{(f(t))^i}{i!}dt\right|\\
 &=&\frac{1}{n}\left|\int_{0}^{\frac r2} \frac{a''(t)}{(a'(t))^3}e^{-f(t)}\sum_{i=0}^{k}\frac{(f(t))^i}{i!}df(t)\right|\\
 &\leq & \frac{64}{27\pi^2}\frac{1}{nr^5} \left|\int_{0}^{\frac r2} e^{-f(t)}\sum_{i=0}^{k}\frac{(f(t))^i}{i!}df(t)\right|\\
 &\leq & \Theta(1)\frac{64}{27\pi^2}\frac{1}{nr^5} \left|\int_{0}^{\frac r2} e^{-f(t)}\frac{(f(t))^k}{k!}df(t)\right|\\
 &= & \Theta(1)\frac{64}{27\pi^2}\frac{1}{nr^5} \left|\int_{0}^{\frac r2} d\left(-e^{-f(t)}\sum_{i=0}^{k}\frac{(f(t))^i}{i!}\right) \right|\\
  &\leq & \Theta(1)\frac{64}{27\pi^2}\frac{1}{nr^5}e^{-f(0)}\sum_{i=0}^{k}\frac{(f(0))^i}{i!}  \\
  &= &o(1).
\end{eqnarray*}
Therefore,
$$
   n\int_{0}^{\frac r2} \frac{(f(t))^ke^{-f(t)}}{k!}dt
    \sim-\frac{1}{a'\left(\frac{r}{2}\right)}e^{-f\left(\frac r2\right)}\frac{f\left(\frac r2\right)^k}{k!}\sim
   \frac{4}{3\pi}e^{-\frac{2\xi}{3}}\left(\frac{5\pi}{16}\right)^{\frac23}\left(\frac{2}{3}\right)^k\frac{1}{k!}.
$$

\end{proof}

\section{Proof of Proposition~\ref{proposition1}}

\begin{figure}[htbp]
\centering
\includegraphics[width=5cm]{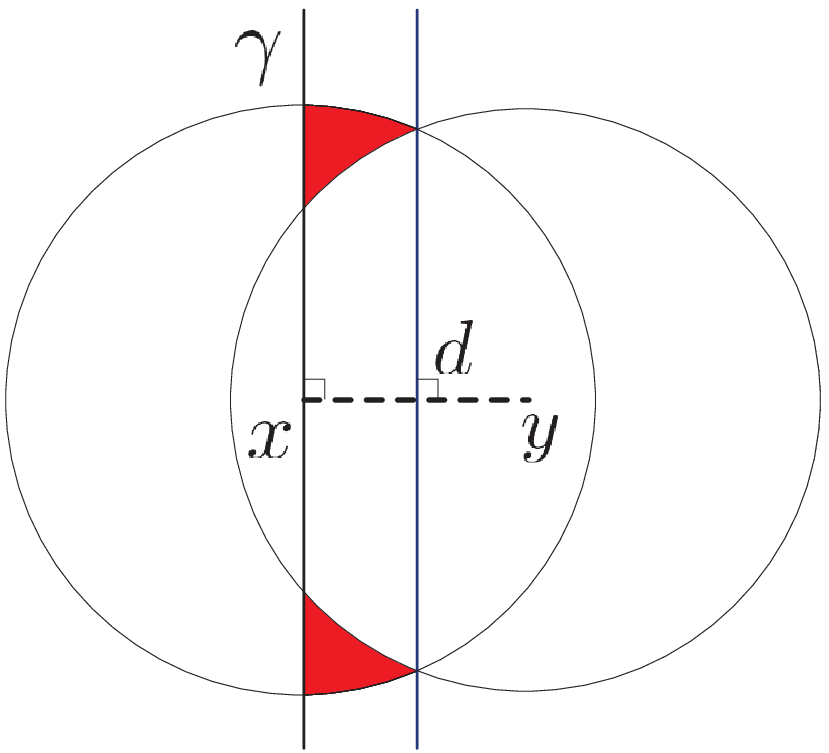}
\caption{Illustration of Lemma~\ref{lem:shadow-low-bound} }
\label{fig:Lemma3}
\end{figure}
We first give a lemma.
\begin{lemma}\label{lem:shadow-low-bound}
The distance  between the centers $x$ and $y$  of two balls which have the same radium $r$ is less than the radius,  i.e., $d=\|xy\|<r$. Plane $\gamma$ is perpendicular to line $xy$ and passes through point $x$. The semi-sphere cut by $\gamma$ which contains point $y$ is denoted by $B_{\mathrm{semi}}(x,r)$, then the volume of $B_{\mathrm{semi}}(x,r)\setminus B(y,r)$ (see red part in Figure~\ref{fig:Lemma3}) is
$$V^{*}(d)=|B_{\mathrm{semi}}\setminus B(y,r)|=\frac{1}{4}\pi d^3.$$
\end{lemma}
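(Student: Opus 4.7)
The plan is a direct slicing computation along the axis through $x$ and $y$. I will set up coordinates with $x$ at the origin and $y=(d,0,0)$, so that the plane $\gamma$ becomes $\{x_1=0\}$ and $B_{\mathrm{semi}}(x,r)=\{(x_1,x_2,x_3):x_1^2+x_2^2+x_3^2\leq r^2,\ x_1\geq 0\}$. For each fixed $x_1\in[0,r]$, I will compute the area of the cross-section of $B_{\mathrm{semi}}(x,r)\setminus B(y,r)$ at height $x_1$, and then integrate in $x_1$.

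The key observation is that, because both balls have the same radius, their cross-sections by the plane perpendicular to $xy$ at height $x_1$ are \emph{concentric} disks centred at $(x_1,0,0)$: the slice of $B(x,r)$ has radius $R_1(x_1)=\sqrt{r^2-x_1^2}$, while the slice of $B(y,r)$ has radius $R_2(x_1)=\sqrt{r^2-(x_1-d)^2}$. Concentricity turns the cross-sectional set difference into either an annulus or the empty set, and one checks that $R_1(x_1)\geq R_2(x_1)$ exactly when $x_1\leq d/2$. Hence the cross-section of $B_{\mathrm{semi}}(x,r)\setminus B(y,r)$ has positive area only on $[0,d/2]$, and on that interval its area equals $\pi(R_1^2-R_2^2)=\pi(d^2-2dx_1)$.

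Integrating the cross-sectional area then yields
$$V^{*}(d)=\int_0^{d/2}\pi(d^2-2dx_1)\,dx_1=\frac{\pi d^3}{4},$$
which is the claimed identity. There is no substantive obstacle here: the whole argument rests on the equal-radius, concentric-slice phenomenon that reduces a three-dimensional volume to a one-dimensional integral of annulus areas. The only point worth a brief sanity check is that the hypothesis $d<r$ ensures $d/2<r$ and that $B(y,r)$ already contains the entire slice of $B_{\mathrm{semi}}(x,r)$ whenever $x_1\geq d/2$, so no cap beyond $x_1=d+r$ needs separate treatment and no pieces of $B_{\mathrm{semi}}(x,r)\setminus B(y,r)$ are missed.
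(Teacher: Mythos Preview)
Your proof is correct and follows essentially the same approach as the paper: slicing perpendicular to the line $xy$ and integrating the annulus areas on $[0,d/2]$. The paper's proof is the one-line computation $V^{*}(d)=\int_0^{d/2}\pi\big((r^2-t^2)-(r^2-(d-t)^2)\big)\,dt=\tfrac{1}{4}\pi d^3$, which is exactly your integral $\int_0^{d/2}\pi(d^2-2dx_1)\,dx_1$; you have simply supplied the justification (concentricity of the slices and the crossover at $x_1=d/2$) that the paper leaves implicit.
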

\begin{proof}
$
V^{*}(d)=\int_0^{\frac{d}{2}}\pi\left((r^2-t^2)-(r^2-(d-t)^2)\right)dt=\frac{1}{4}\pi d^3.
$
\end{proof}

\begin{proof} (Proof for  Proposition~\ref{proposition1}).
Let $
    d_0=\left(\frac{4r}{n^{\frac23}\pi^{\frac23}}\right)^{\frac13},
$
then  $0<d_0= \Theta\left(\frac{(\log n)^\frac{1}{9}}{n^{\frac13}}\right)<r$ for any sufficiently large $n$, and
$
     \frac{n \pi d_0^3}{4}=(n\pi r^3)^{\frac13}.
$
$\forall x\in \Omega$, let
  $$
       \Gamma_1(x)=\left\{y\in B(x,r)\cap \Omega: \mathrm{dist}(y,x)\leq d_0\right\},$$
$$
  \Gamma_2(x)=\left\{ y\in B(x,r)\cap \Omega: \mathrm{dist}(y,x)\geq d_0, \mathrm{dist}(y,\partial \Omega)> (G(\Omega)+1)r^2\right\},
$$
$$
    \Gamma_3(x)=\left\{y\in B(x,r)\cap \Omega: \mathrm{dist}(y,\partial \Omega)\leq (G(\Omega)+1)r^2\right\}.
$$
Here $\mathrm{dist}(y,x)=\|yx\|$, the distance between points $y$ and $x$.
  Constant $G(\Omega)$ is an uniform upper bound given in Lemma~\ref{lemma:G(Omega)}.  Clearly,
$
    B(x,r)\cap\Omega\subset\Gamma_1(x)\cup\Gamma_2(x)\cup\Gamma_3(x),
$
as Figure~\ref{fig:Ball-in-Gammas} illustrates.

\begin{figure}[htbp]
\begin{center}
\subfigure{
\begin{minipage}{6.5cm}
\includegraphics[scale=0.55] {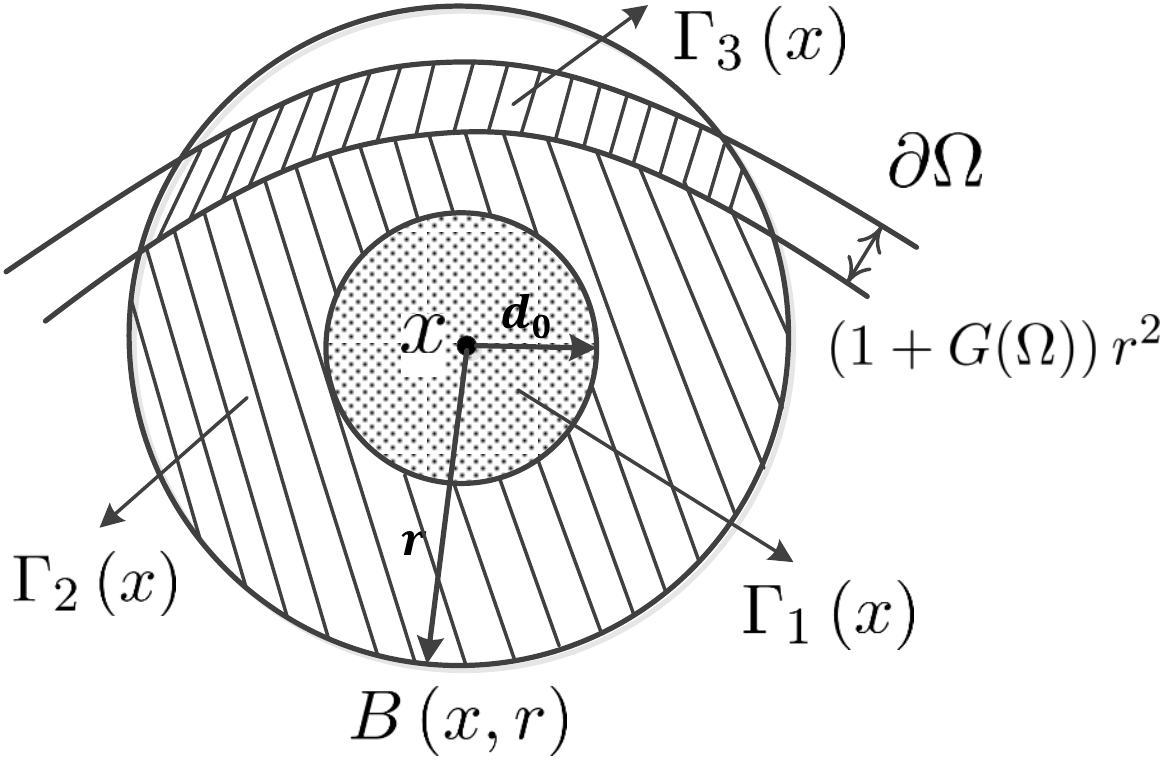}
\caption{ $B(x,r)\cap\Omega\subset\Gamma_1(x)\cup\Gamma_2(x)\cup\Gamma_3(x)$}\label{fig:Ball-in-Gammas}
\end{minipage}
}
\subfigure{
\begin{minipage}{6.5cm}
\includegraphics[scale=0.55]{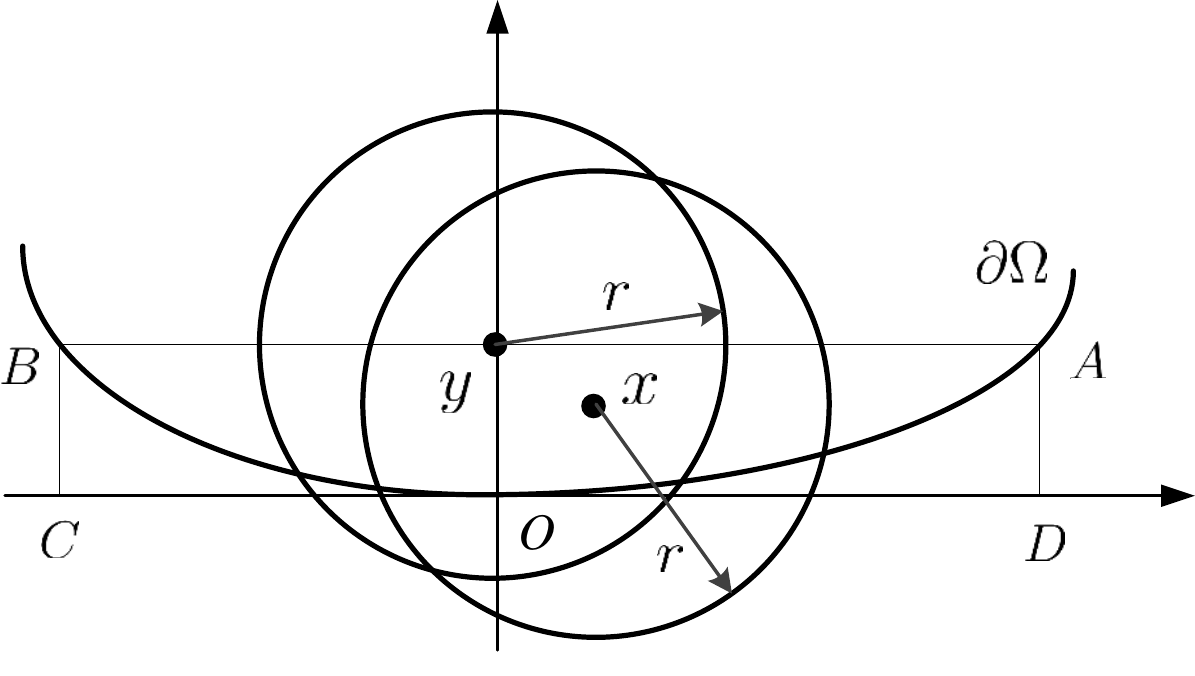}
       \caption{For proof   $A_2=o(1)$}\label{fig:lemma2-Application}
\end{minipage}
}
\end{center}
\end{figure}

%
%
%
%
%
%

Let
\begin{equation}\label{eq:A123}
      A_i=\frac{1}{n\pi r^3}\int_{\Omega}n\psi^k_{n,r}(x)dx\int_{\Gamma_i(x)}n\Pr(Z_3=k-1)dy, i=1,2,3.
\end{equation}
 We will prove $A_i=o(1), i=1,2,3$, in the following three steps.
  Notice that
we have proved Proposition~\ref{pro:Explicit-Form}: $\int_{\Omega}n\psi^k_{n,r}(x)dx\sim e^{-c}$.

\par Step 1: to prove $A_1=o(1)$.
\begin{eqnarray*}
    A_1&=&\frac{1}{n\pi r^3}\int_{\Omega}n\psi^k_{n,r}(x)dx\int_{\Gamma_1(x)}n\Pr(Z_3=k-1)dy\\
    &\leq&\frac{1}{n\pi r^3}\int_{\Omega}n\psi^k_{n,r}(x)dx\int_{\Gamma_1(x)}ndy\\
     &\leq& \frac{1}{\pi r^3}\int_{\Omega}|\Gamma_1(x)|n\psi^k_{n,r}(x)dx\\
       &\leq& \Theta(1)\frac{ d_0^3}{ r^3}\int_{\Omega}n\psi^k_{n,r}(x)dx =o(1).
\end{eqnarray*}

\par Step 2: to prove $A_2=o(1)$. For any $y\in \Gamma_2(x)$,
$\mathrm{dist}(y,\partial \Omega)\geq(G(\Omega)+1)r^2$. Then by Lemma~\ref{lemma:G(Omega)}, $\|yA\|>r$ and $\|yB\|>r$, see Figure~\ref{fig:lemma2-Application}.
This means that at least more than one half of  $B(y,r)$ falling in $\Omega$.
As a result,
$$v_{y\backslash x}=|B(y,r)\cap \Omega|-|B(x,r)\cap B(y,r)\cap \Omega|\geq \frac{V^{\ast}(d_0)}{2},$$
where  $V^{\ast}(d_0)$ is given by Lemma~\ref{lem:shadow-low-bound}. Therefore,
 $nv_{y\backslash x}\geq\frac{1}{2}nV^{\ast}(d_0)=\frac{n}{2}\frac{\pi d_0^3}{4}=\frac{1}{2}(n\pi r^3)^{\frac13}.$

 \begin{eqnarray*}
    A_2
   &=&\frac{1}{n\pi r^3}\int_{\Omega}n\psi^k_{n,r}(x)dx\int_{\Gamma_2}n\Pr(Z_3=k-1)dy\\
   &\leq&\frac{1}{n\pi r^3}\int_{\Omega}n\psi^k_{n,r}(x)dx
    \int_{\Gamma_2}\frac{n(n\pi r^3)^{k-1}\exp\left(-\frac 12\left(n\pi r^3\right)^{\frac13} \right) dy}{(k-1)!}\\
   &\leq&\frac{1}{n\pi r^3}\int_{\Omega}n\psi^k_{n,r}(x)dx
   \frac{(n\pi r^3)^{k}\exp\left(-\frac {1}{2}\left(n\pi r^3\right)^{\frac13} \right)}{(k-1)!}\\
   &\sim&\frac{e^{-c}}{n\pi r^3}o(1)=o(1).
 \end{eqnarray*}

\par Step 3: to prove $A_3=o(1)$.
 Notice that $\Gamma_3(x)$ falls in a region with the width less than $(G(\Omega)+1)r^2$, we have
$     |\Gamma_3(x)|\leq  \mathrm{Area}(\partial B(x,r)) (G(\Omega)+1)r^2=\Theta(1)r^4.  $
\begin{eqnarray*}
    A_3&=&\frac{1}{n\pi r^3}\int_{\Omega}n\psi^k_{n,r}(x)dx\int_{\Gamma_3(x)}n\Pr(Z_3=k-1)dy\\
      &\leq &\frac{1}{n\pi r^3}\int_{\Omega}n\psi^k_{n,r}(x)dx\int_{\Gamma_3(x)}ndy\\
      &=& \frac{1}{\pi r^3}\int_{\Omega}|\Gamma_3(x)|n\psi^k_{n,r}(x)dx\\
       &\leq&   \Theta(r) \int_{\Omega}n\psi^k_{n,r}(x)dx=o(1).
\end{eqnarray*}
Finally, we have
$$
 \frac{1}{n\pi r^3}\int_{\Omega}n\psi^k_{n,r}(x)dx\int_{\Omega\cap
B(x,r)}n\Pr(Z_3=k-1)dy\leq A_1+A_2+A_3=o(1).$$

The proposition is therefore proved.
\end{proof}

\end{document}